\newtheorem{theorem}[equation]{Theorem}
\newtheorem{lemma}[equation]{Lemma}
\newtheorem{proposition}[equation]{Proposition}
\newtheorem{definition}[equation]{Definition}
\newtheorem{remark}[equation]{Remark}
\numberwithin{equation}{section}
\newcommand{\ot}{\otimes}
\newcommand{\Z}{\mathbb{Z}}
\newcommand{\Cx}{\mathbb{C}}
\newcommand{\fg}{{\mathfrak g}}
\newcommand{\fh}{{\mathfrak h}}
\newcommand{\Max}{\mathrm{Max}}
\newcommand{\ga}{\alpha}
\newcommand{\gs}{\sigma}
\newcommand{\proof}{{\bf Proof\ \ }}
\newcommand{\qed}{\hfill $\Box$}
\newcommand{\cL}{{\mathcal L}}
\newcommand{\ev}{\mathrm{ev}}
\newcommand{\la}{\langle}
\newcommand{\ra}{\rangle}
\title{Weight modules for current algebras}
\author{Daniel Britten$\hbox{}^{1 *}$, Michael Lau$\hbox{}^{2 *}$, and Frank Lemire$\hbox{\,}^{1}$\thanks{Funding from the Natural Sciences and
Engineering Research Council of Canada is gratefully acknowledged.} \vspace{0.3cm}
\vspace{0.1cm}\\$\hbox{\ \,}^1${\small University of Windsor,
Department of Mathematics and Statistics},\\ {\small Windsor, ON, Canada N9B 3P4}\\ \vspace{0.1cm}\\
$\hbox{\ \,}^2${\small Universit\'e Laval,
D\'epartement de math\'ematiques et de statistique},\\ {\small Qu\'ebec, QC, Canada G1V 0A6}\\}
\date{}
\begin{document}

\maketitle

\begin{small}
\noindent {\bf Abstract:} For any finite-dimensional simple Lie algebra $\fg$ and commutative associative algebra $S$ of finite type, we give a complete classification of the simple weight modules of $\fg\ot S$ with bounded weight multiplicities.

\bigskip

\noindent {\bf Keywords:} weight modules, current algebras, admissible representations, infinite-dimensional Lie algebras 

\bigskip

\noindent
{\bf MSC2010:} 17B10 (primary); 17B65, 17B22 (secondary)

\end{small}
\maketitle

\section{Introduction}\label{s1}  The classification of simple weight modules is widely seen as a difficult problem.  In a spectacular tour-de-force, Olivier Mathieu gave a complete description of such modules for finite-dimensional simple Lie algebras \cite{Ma00}.  The crucial part of Mathieu's classification was understanding the simple admissible highest weight modules.

In the present paper, we consider this question in the context of (generalised) current algebras $\cL=\fg\ot_k S$, where $\fg$ is a finite-dimensional simple Lie algebra over $k$, and $S$ is a commutative, associative, and unital $k$-algebra which is of finite type.  These algebras include the finite-dimensional simple Lie algebras $\fg$ and the loop algebras $\fg\ot_{\mathbb{C}}\mathbb{C}[t,t^{-1}]$ of affine Kac-Moody theory.  Our methods work not only for the classification of simple admissible highest weight modules, but also describe the simple objects in the much larger category of {\em admissible modules}, weight modules whose weight multiplicities are uniformly bounded.  We hope that this paper will contribute to solving the harder problem of classifying all simple weight modules for these algebras.

The category of admissible modules includes the finite-dimensional simple modules, all of which are known to be evaluation modules.  (See the introductions of \cite{multiloop,CFK} or the more general results in \cite{NSS,repforms,twcurr} for details.)  What is surprising is that this larger category of generally infinite-dimensional modules can also be described in terms of evaluations.  We now summarize the content of this paper.

In Section 2, we recall basic definitions and note that any classification of simple admissible modules for a current algebra $\cL$ is also a classification of simple admissible modules for its universal central extension $\widetilde{\cL}$, since the central elements must act trivially on any simple weight module. 

Section 3 opens with a review of elementary facts about evaluation representations in the infinite-dimensional context of weight modules.  In particular, we show that a representation $\phi:\ \cL\rightarrow \hbox{End}\,V$ is an evaluation representation if and only if it factors through a direct sum of several copies of $\fg$.  Key parts of this result have been used in past work (for example, in \cite{Ma00}), but we are not aware of a proof appearing in the literature.  We then address the natural question of which evaluation modules are admissible.  The answer (Theorem \ref{3.14}) is strikingly simple: an evaluation module is admissible if and only if each of its tensor components (corresponding to the action of each copy of $\fg$) is admissible and at most one component is infinite dimensional.  The proof, contained in a series of propositions and lemmas, is a detailed study of the relevant structure and combinatorics of indecomposable root systems, building on work of Benkart, Fernando, and two of the authors of the present paper \cite{fernando, BrLe,BBL}.

In Section 4, we classify admissible modules for current algebras.  We use the finite dimensionality constraint on the weight spaces to prove that the kernel of any irreducible weight representation is cofinite in $\cL$.  It is thus of the form $\fg\ot I$ for some cofinite ideal $I$ of $S$.  We then use admissibility to show that $I$ is a radical ideal.  

While these two results are obvious in the context of finite-dimensional modules, they are much more challenging in the potentially infinite-dimensional context of weight representations, and completely new arguments are required.  Cofiniteness is proved using the cofiniteness of the space $\{s\in S\ |\ (x\ot s)v=0\}$ for each $v\in V$ and root vector $x\in \fg$.  The idea is then to express $I$ as a finite intersection of such spaces.  To prove that $I$ is a radical ideal, we note that if it is not, then there is a nonzero ideal $N\subset S/I$ for which $N^2=0$.  We then use $\mathfrak{sl}_2$-combinatorics together with admissibility to argue that there is a nonzero vector $w\in V$ annihilated by the ideal $\fg\ot N\subseteq\cL$.  The space killed by $\fg\ot N$ is a nonzero submodule of $V$, so is equal to $V$ since $V$ is simple.  This contradicts the fact that $\fg\ot S/I$ must act faithfully on $V$. 

Once we know that $I$ is a cofinite radical ideal of $S$, it is straightforward to prove Theorem \ref{modules-are-evaluation}, that every simple admissible module of $\cL$ is an evaluation module.  We then apply Theorem \ref{3.14} to obtain Theorem \ref{classification}, the main result of this paper, a complete description of the isomorphism classes of simple admissible modules in terms of certain finitely supported maps from $\Max\,S$ to the moduli space of admissible modules for $\fg$.

\bigskip

\noindent
{\bf Acknowledgements.}  Much of this work was completed while M.L. was on sabbatical at the University of California, Berkeley.  He thanks the U.C. Berkeley mathematics department for its hospitality during his visit.  The authors also wish to thank Georgia Benkart and Vera Serganova for helpful discussions during the preparation of this paper.


\section{Preliminaries}\label{s2}
Throughout this paper, $\fg$ will denote a finite-dimensional simple Lie algebra over an algebraically closed field $k$ of characteristic zero.  All tensor products and algebras will be taken over the base field $k$.  Fix a Cartan subalgebra $\fh$ of $\fg$, let $\Phi$ be the corresponding root system, and let $\Delta\subset\Phi$ be a base of simple roots.  The set of positive and negative roots of $(\fg,\fh,\Delta)$ will be denoted by $\Phi^+$ and $\Phi^-$, respectively, with corresponding triangular decomposition $\fg=\mathfrak{n}_-\oplus\fh\oplus\mathfrak{n}_+$, where $\mathfrak{n}_\pm=\oplus_{\alpha\in\Phi^\pm}\fg_\alpha$.  For each positive root $\ga\in\Phi^+$, fix nonzero elements $x_{\pm\ga}\in\fg_{\pm\ga}$ and $h_\ga\in\fh$, so that $\mathcal{B}=\{x_\ga,h_\beta\ |\ \ga\in\Phi,\beta\in\Delta\}$ is a Chevalley basis of $\fg$ and each $\{x_\ga,x_{-\ga},h_\ga\}$ forms an $\mathfrak{sl}_2$-triple: $[x_\ga,x_{-\ga}]=h_\ga$, $[h_\ga,x_\ga]=2x_\ga$, and $[h_\ga,x_{-\ga}]=-2x_\ga$ for all $\ga\in\Phi^+$.  We write $(x|y)$ for the Killing form of $x,y\in \fg$, and $U(L)$ for the universal enveloping algebra of any Lie algebra $L$.

Let $S$ be a commutative, associative, and unital $k$-algebra of finite type.  In this paper, we consider modules for the {\em (generalised) current algebra} $\cL=\fg\ot S$, with bracket 
 $
[x\otimes r, y\otimes s]= [x,y]\otimes rs,
$
for all $x,y\in\fg$ and $r,s\in S$.  When $S=k$, $\cL$ is the finite-dimensional simple Lie algebra $\fg$; when $k=\Cx$ and $S=\Cx[t,t^{-1}]$, $\cL$ becomes the loop algebra of affine Kac-Moody theory.  Other well-known examples are the classical current algebra $\fg\ot k[t]$, the multiloop algebra $\fg\ot k[t_1^{\pm 1},\ldots,t_N^{\pm 1}]$, and the $3$-point algebra $\fg\ot k[t,t^{-1},(t-1)^{-1}]$.  Viewing the vector space $\fg$ as an affine scheme, the Lie algebra $\cL$ can also be interpreted geometrically as a space of morphisms from $\hbox{Spec}\,S$ to $\fg$, under pointwise Lie bracket.  See \cite{NSS} or \cite{twcurr} for details.

Each representation of $\cL$ is also a representation of any central extension of $\cL$, by letting the central elements act trivially.  There is a well-known construction \cite{Ka85} of the universal central extension $\widetilde{\cL}$ of $\cL$ using the vector space quotient $\la S,S\ra:=(S\ot S)/Q$, where $Q$ is the subspace spanned by the set
$$
\{r\otimes s+s\otimes r,\ rs\otimes t+ st\otimes r+tr\otimes s\ |\  r,s,t\in S\}.
$$
Explicitly, $\widetilde{\cL}$ is isomorphic to, and will henceforth be identified with, the Lie algebra $\left(\fg\otimes  {S}\right)\oplus \la {S},{S}\ra,$ with bracket 
\[
[x \otimes a+\la r,s\ra, y\otimes b+\la u,v\ra]= [x,y]\otimes (ab)+ (x | y)\la a, b\ra,
\]
for all $x,y\in\fg$ and $a,b,r,s,u,v\in S$, where $\la r,s\ra$ denotes the coset $r\ot s+Q$ in $\la{S},{S}\ra$, for all $r,s\in S$.

\begin{definition}{\em  An $\cL$- or $\widetilde{\cL}$-module $V$ is said to be a {\em weight module} if $V$ decomposes as a direct sum of finite-dimensional common eigenspaces under the action of $\fh\ot 1$.  That is,
\[
V=\bigoplus_{\lambda\in \fh^*} V_\lambda,
\]
where each {\em weight space} $V_\lambda=\{v\in V\mid (h\otimes 1)v=\lambda (h)v\hbox{ for all } h\in \fh\}$ is finite dimensional, and the elements $\lambda\in\fh^*$ for which $V_\lambda\neq 0$ are called {\em weights}.  A nonzero vector $v\in V$ is a {\em maximal vector} if $(e \otimes s) v=0$ for all $e\in\mathfrak{n}_+$ and $s\in S$.
A weight module  $V$ is a {\em highest weight module} of weight $\mu$ if $V_\mu$ is $1$-dimensional, and there is a maximal vector $v\in V_\mu$ generating the module $V$.  
A weight module $V=\bigoplus_{\lambda\in \fh^*} V_\lambda$  is said to be {\em admissible} if there exists $B\in \Z_{>0}$ such that $\dim V_\lambda \leq B$ for all $\lambda$. Weight modules for current algebras $L\ot S$ of semisimple Lie algebras $L$ are defined analogously.}
\end{definition}

The main goal of this paper is to give an explicit construction of all simple admissible modules of $\cL$.  The finite-dimensional simple $\cL$-modules were described by Chari and Rao in the case where $S$ is the algebra $k[t,t^{-1}]$ of Laurent polynomials \cite{chari86,rao93}.  Their classification easily extends to finite-dimensional simple $\cL$-modules when $S$ is arbitrary.  See the introduction of \cite{multiloop} or \cite{CFK}, or the more general results in \cite{NSS,repforms,twcurr}, for instance.  By Theorem \ref{centre-acts-as-zero} below, this classification also covers finite-dimensional simple modules for the corresponding universal central extensions $\widetilde{\cL}$.  Naturally, these modules are included in our construction.

The following proposition allows us to restrict our attention to $\cL$-modules.  Our original proof was simplified to the argument below following a discussion with G.~Benkart.

\begin{theorem} \label{centre-acts-as-zero} If $V$ is a simple weight module for $\widetilde{\cL}$, then every central element $\la r,s\ra$ of $\widetilde{\cL}$ acts trivially on $V$.  
\end{theorem}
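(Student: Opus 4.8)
The plan is to exploit the fact that $V$ is a weight module: the central elements $\langle r,s\rangle$ commute with $\fh\otimes 1$, so each $\langle r,s\rangle$ preserves every weight space $V_\lambda$. Since weight spaces are finite dimensional, for each $\lambda$ the action of the (abelian) central ideal $\langle S,S\rangle$ on $V_\lambda$ is by a family of commuting operators on a finite-dimensional space. If one can further show these operators act as scalars — equivalently, that on a simple module the center acts by a single character — then the character must be trivial, since any nonzero central element appears as a commutator $[x\otimes a,\,y\otimes b]$ up to the term $[x,y]\otimes ab$, and the trace of a commutator (restricted to a finite-dimensional weight space, or after a suitable reduction) vanishes.

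Concretely, first I would fix a root $\alpha\in\Phi$ and consider, inside $\widetilde{\cL}$, the subalgebra generated by $x_\alpha\otimes a$, $x_{-\alpha}\otimes b$ for $a,b\in S$; their brackets produce $h_\alpha\otimes ab$ together with a central term $(x_\alpha|x_{-\alpha})\langle a,b\rangle$. Choosing a highest weight line (or rather, picking any weight $\lambda$ and a vector $v\in V_\lambda$ annihilated by $x_\alpha\otimes S$ for a well-chosen $\alpha$ — possible because within any single $\mathfrak{sl}_2\otimes S$ string the weights are bounded), one gets a direct relation expressing $\langle a,b\rangle v$ in terms of $(h_\alpha\otimes ab)v$ and $(x_{-\alpha}\otimes b)(x_\alpha\otimes a)v$. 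The key point (following the simplification attributed to Benkart) is that by simplicity $V$ is generated by such a vector, or one can compare the scalar by which $\langle a,b\rangle$ acts on $V_\lambda$ for different $\lambda$ and show it is constant — then a trace argument on a single finite-dimensional $V_\lambda$ forces that scalar to be $0$, because $\langle a,b\rangle$ is, up to the semisimple-type term $[x_\alpha,x_{-\alpha}]\otimes ab$ which has trace $0$ on any weight space that is $\mathfrak{sl}_2$-integrable, a trace-zero operator.

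More carefully, I would argue as follows. Because $[\fh\otimes 1,\langle r,s\rangle]=0$, Schur's lemma for the commutant does not immediately apply (weight spaces, not all of $V$, are finite dimensional), so instead I would use: the set $Z$ of vectors on which all $\langle r,s\rangle$ act by a fixed character $\chi$ is an $\cL$-submodule (since $[\cL,\langle S,S\rangle]=0$ implies $\cL$ preserves each generalized eigenspace of the center, and a primary decomposition argument plus finite-dimensionality of weight spaces shows $V$ is the sum of such; simplicity then forces a single $\chi$). Now evaluate $\chi$ on a commutator: $\langle a,b\rangle = \frac{1}{(x_\alpha|x_{-\alpha})}\big([x_\alpha\otimes a,\,x_{-\alpha}\otimes b] - h_\alpha\otimes ab\big)$. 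Apply this to a weight vector $v\in V_\lambda$ lying in the top of a finite $\mathfrak{sl}_2(\alpha)$-string and with $(x_\alpha\otimes a)v$ also controlled; summing the resulting scalar identity over a basis of an $\mathfrak{sl}_2(\alpha)$-stable finite-dimensional subspace (a finite union of weight spaces, which is $\mathfrak{sl}_2(\alpha)$-integrable and hence finite-dimensional), the trace of $[x_\alpha\otimes a,\,x_{-\alpha}\otimes b]$ is $0$ and the trace of $h_\alpha\otimes ab$ is $0$ as well since $h_\alpha$ is the coroot and the $\mathfrak{sl}_2$-weights on a finite-dimensional module sum to zero. Hence $\chi(\langle a,b\rangle)\cdot(\dim W)=0$, so $\chi=0$.

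The main obstacle I expect is the step asserting that on a \emph{simple} weight module the center acts by a single character rather than merely preserving a filtration: one must combine the finite-dimensionality of weight spaces (to get that each $\langle r,s\rangle$ acts locally finitely, so generalized eigenspace decompositions exist weight-space-by-weight-space and patch up because $\cL$ commutes with the center) with simplicity. The secondary subtlety is locating, for a given pair $a,b\in S$, a suitable root $\alpha$ and a weight vector annihilated (or nearly annihilated) by $x_\alpha\otimes a$, so that the trace/commutator computation closes; here the boundedness of weights along each $\mathfrak{sl}_2\otimes S$ direction — a standard feature of weight modules for $\mathfrak{sl}_2$ acting with finite-dimensional weight spaces — is what makes it work. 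The rest is routine bookkeeping with the Chevalley basis and the bracket formula for $\widetilde{\cL}$.
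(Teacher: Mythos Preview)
Your trace-of-a-commutator idea is the right one, but the implementation has a genuine gap. You rely on finding, for each root $\alpha$, either a weight vector annihilated by $x_\alpha\otimes S$ or a finite-dimensional $\mathfrak{sl}_2(\alpha)$-stable subspace on which to compute traces, and you justify this by asserting that weights are bounded along each $\mathfrak{sl}_2$-string. That is false for general weight modules: the module in Remark~\ref{infinite-mults} is a simple $\mathfrak{sl}_2$-weight module with one-dimensional weight spaces and weights extending to infinity in both directions, so no nonzero vector is annihilated by $e$ (or $f$) and no nonzero $\mathfrak{sl}_2$-stable subspace is finite dimensional. Since your operators $x_\alpha\otimes a$ and $x_{-\alpha}\otimes b$ shift weights by $\pm\alpha$, there is in general no finite-dimensional invariant subspace on which the trace of their commutator makes sense, and your argument does not close.

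The paper's proof repairs exactly this step by replacing the root vectors with \emph{Cartan} elements: pick $h_1,h_2\in\fh$ with $(h_1\mid h_2)\neq 0$. Then $h_1\otimes r$ and $h_2\otimes s$ commute with $\fh\otimes 1$ and hence preserve each finite-dimensional weight space $V_\nu$, so the trace of their commutator on $V_\nu$ is automatically zero. On the other hand $[h_1\otimes r,\,h_2\otimes s]=(h_1\mid h_2)\langle r,s\rangle$, which acts as the scalar $(h_1\mid h_2)\lambda_{r,s}$ once one knows the centre acts by scalars. For that last point the paper invokes Schur's Lemma directly: $S$ is of finite type, hence countable dimensional, so $V$ is countable dimensional over the algebraically closed field $k$ and every central element acts as a scalar. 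This bypasses your generalized-eigenspace argument entirely (though your argument for a single character would also work, with a bit more care). The whole proof is then two lines.
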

\begin{proof}  Since $S$ is of finite type, it is countable dimensional over $k$.  Since $\fg$ is finite dimensional and $V$ is simple, it follows that $V$ is countable dimensional, and by Schur's Lemma, the central elements $\la r,s\ra$ of $\widetilde{\cL}$ act as scalars $\lambda_{r,s}$ on $V$.


Since the restriction of the Killing form to the Cartan subalgebra $\fh$ is nondegenerate, there exist $h_1,h_2\in \fh$ with $(h_1|h_2)\neq 0$.  The trace of the operator 
$$(h_1\ot r) \circ (h_2\ot s)-(h_2\ot s) \circ (h_1\ot r)$$
restricted to any (finite-dimensional) weight space $V_\nu$ is zero.  But $[h_1\ot r,h_2\ot s]=(h_1|h_2)\la r,s\ra$ acts as the scalar $(h_1|h_2)\lambda_{r,s}$, and thus has trace $\dim(V_\nu)(h_1|h_2)\lambda_{r,s}$ on $V_\nu$.  Therefore, $\la r,s\ra$ acts as zero on all nonzero weight spaces, and thus acts trivially everywhere on $V$. \qed 
\end{proof}

\section{Evaluation representations}\label{s2.5}
  
We will later show that simple admissible modules can be classified in terms of {\em evaluation representations}.  To explain this statement, let $M_1,\ldots,M_r\subset S$ be maximal ideals.  There is then an {\em evaluation map}
\begin{eqnarray*}
\hbox{ev}_{\underline{M}}:\ \cL&\rightarrow&\fg^{\oplus r}\\
x\ot s&\mapsto&\big(s(M_1)x,\ldots,s(M_r)x\big),\\ 
\end{eqnarray*}
where $s(M_i)\in k$ is the residue of $s$ modulo $M_i$:
$$s(M_i)+M_i=s+M_i,$$
as elements of $S/M_i\cong k$.  By the Chinese Remainder Theorem, $\hbox{ev}_{\underline{M}}$ is surjective whenever $M_1,\ldots, M_r$ are distinct, and any tensor product $W_1\ot \cdots\ot W_r$ of simple $\fg$-modules then pulls back to an irreducible $\cL$-module under the evaluation map:
$$\cL\stackrel{\hbox{ev}_{\underline{M}}}{\longrightarrow}\fg^{\oplus r}\longrightarrow\hbox{End}(W_1\ot \cdots\ot W_r),$$
where the action of $\cL$ on $W_1\ot \cdots\ot W_r$ is given by the formula
$$(x\ot s).(w_1\ot\cdots\ot w_r)=\sum_{i=1}^rs(M_i)w_1\ot\cdots\ot xw_i\ot\cdots\ot w_r.$$
Such representations are called {\em evaluation representations} of $\cL$, and we denote the evaluation representation above by $V(\underline{M},\underline{W})$, where $\underline{M}=(M_1,\ldots,M_r)$ and $\underline{W}=(W_1,\ldots,W_r)$.  
If $W_1,\ldots,W_r$ are weight modules for $\fg$ and $M_1,\ldots,M_r$ are distinct maximal ideals of $S$, then we say that $V(\underline{M},\underline{W})$ is an {\em evaluation weight module}.  

\begin{remark}\label{infinite-mults} 
Note that evaluation weight modules may have infinite weight multiplicities, so they are not necessarily weight modules for $\cL$.  For example, consider the infinite-dimensional $\mathfrak{sl}_2(\mathbb{C})$-module $W$ spanned by the linearly independent set $\{v_i\ |\ i\in\mathbb{Z}\}$, where
$$hv_i=2iv_i,\ ev_i=-\frac{(2i+1)^2}{4}v_{i+1},\ \hbox{and}\ fv_i=v_{i-1},$$
and $[h,e]=2e,\ [h,f]=-2f,$ and $[e,f]=h$.  The module $W$ is a simple weight module for $\mathfrak{sl}_2(\mathbb{C})$, yet the evaluation weight module $W\ot W$ obtained by evaluating $\cL=\mathfrak{sl}_2(\mathbb{C})\ot S$ at any two distinct maximal ideals of $S$ has infinite-dimensional weight spaces.
\end{remark}

Our next goal is to prove the following theorem, which says that a given simple weight $\cL$-module is an evaluation weight module if the $\cL$-action $\phi:\ \cL\rightarrow \hbox{End}\,V$ factors through a direct sum of finitely many copies of $\fg$.

\begin{theorem}\label{thm:evaluations}
Let $V$ be a simple weight module for $\cL$, with module action given by $\phi:\ \cL\rightarrow \hbox{\em End}\,V$.  Suppose there exist pairwise distinct $M_1,\ldots ,M_r\in\hbox{\em Max}\,S$ and a Lie algebra homomorphism $\psi:\ \fg^{\oplus r}\rightarrow\hbox{\em End}\,V$ such that $\phi=\psi\circ\hbox{ev}_{\underline{M}}$.  Then $V$ is isomorphic to an evaluation weight module of $\cL$.
\end{theorem}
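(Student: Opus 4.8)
The plan is to transport the module structure through the evaluation map onto the finite-dimensional semisimple Lie algebra $\ol{\fg}:=\fg^{\oplus r}$, show that $V$ is a genuine weight module for $\ol{\fg}$, and then decompose it as an outer tensor product of simple weight $\fg$-modules.

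\emph{Step 1: reduction to $\ol{\fg}$.} Since $M_1,\dots,M_r$ are pairwise distinct, the Chinese Remainder Theorem makes $\ev_{\underline M}\colon\cL\to\ol{\fg}$ surjective; hence $\psi$ is defined on all of $\ol{\fg}$, $\psi(\ol{\fg})=\phi(\cL)$, the $\cL$- and $\ol{\fg}$-submodules of $V$ coincide, and $V$ is a \emph{simple} $\ol{\fg}$-module. Write $\fg^{(i)}\subset\ol{\fg}$ for the $i$th summand, with transported Chevalley data $\fh^{(i)},\,x^{(i)}_{\pm\ga},\,h^{(i)}_\ga$, and use the Chinese Remainder Theorem once more to choose $\eps_1,\dots,\eps_r\in S$ with $\eps_i\equiv\delta_{ij}\pmod{M_j}$. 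Then, modulo $\ker\ev_{\underline M}=\fg\ot(M_1\cap\dots\cap M_r)$, one has $\eps_i^2\equiv\eps_i$, $\eps_i\eps_j\equiv0$ for $i\ne j$, $\sum_i\eps_i\equiv1$, and $\ev_{\underline M}(x\ot\eps_i)=(0,\dots,x,\dots,0)\in\fg^{(i)}$.

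\emph{Step 2: $V$ is a weight module for $\ol{\fg}$.} It suffices to prove that each operator $T:=\phi(h\ot\eps_i)$, $h\in\fh$, is semisimple on $V$: these commute and span the image of $\ol{\fh}=\bigoplus_i\fh^{(i)}$, and once $\ol{\fh}$ acts semisimply the $\ol{\fh}$-weight spaces are finite dimensional, since each lies in a single $\cL$-weight space $V_\mu$ (the $\ol{\fh}$-weight $(\nu_1,\dots,\nu_r)$ restricts to the diagonal $\fh$-weight $\nu_1+\dots+\nu_r$). Now $T$ commutes with $\fh\ot1$, so it preserves each finite-dimensional $V_\mu$ and has a Jordan decomposition $T=S+N$ obtained by gluing the Jordan decompositions on the $V_\mu$; here $S$ is semisimple and $N$ locally nilpotent on $V$, and $[S,N]=0$. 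I claim $N$ is annihilated by the adjoint action of all of $\phi(\cL)=\psi(\ol{\fg})$. Since $\eps_i$ is idempotent modulo the kernel, for every root $\beta$ we have $[T,\phi(x_\beta\ot\eps_i)]=\beta(h)\,\phi(x_\beta\ot\eps_i)$; thus $\phi(x_\beta\ot\eps_i)\colon V_\mu\to V_{\mu+\beta}$ intertwines the operator $T|_{V_\mu}+\beta(h)$ with $T|_{V_{\mu+\beta}}$. An intertwiner of such finite-dimensional $k[t]$-modules respects the primary decomposition and carries the nilpotent part of one endomorphism to that of the other (the semisimple parts being realizable by one common interpolating polynomial in $t$); hence $\phi(x_\beta\ot\eps_i)$ intertwines the corresponding nilpotent parts, i.e.\ $[N,\phi(x_\beta\ot\eps_i)]=0$. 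Likewise $[N,\phi(h'\ot\eps_i)]=0$ for $h'\in\fh$ (the Jordan parts of $T$ being polynomials in $T$ on each $V_\mu$), so $N$ commutes with $\phi(\fg\ot\eps_i)=\psi(\fg^{(i)})$; and for $j\ne i$, $[T,\phi(y\ot\eps_j)]=\phi([h,y]\ot\eps_i\eps_j)=0$, so $N$ commutes with $\psi(\fg^{(j)})$ too. Therefore $N\in\End_{\cL}(V)=k$ by Schur's lemma (using, as in Theorem~\ref{centre-acts-as-zero}, that $V$ is countable dimensional). A locally nilpotent scalar operator on $V\ne0$ vanishes, so $N=0$ and $T=S$ is semisimple. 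Hence $\ol{\fh}$ acts semisimply and $V$ is a weight module for $\ol{\fg}$.

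\emph{Step 3: tensor decomposition and conclusion.} Now $V$ is a simple weight module for the semisimple Lie algebra $\fg^{(1)}\oplus\dots\oplus\fg^{(r)}$, and I would invoke the standard fact (among the elementary properties of evaluation modules; provable by induction on $r$, the key point being that an isotypic component of $V$ for $\fg^{(1)}$ is an $\ol{\fg}$-submodule and hence all of $V$) that $V\cong W_1\ot\dots\ot W_r$ as an outer tensor product, with each $W_i$ a simple weight module for $\fg^{(i)}\cong\fg$ (the weight-module property for $W_i$ following from the finite-dimensionality of the $\ol{\fh}$-weight spaces of $V$). Transporting each $\fg^{(i)}$ back to $\fg$ and recalling $\ev_{\underline M}(x\ot s)=(s(M_1)x,\dots,s(M_r)x)$, the $\ol{\fg}$-action on $W_1\ot\dots\ot W_r$ pulls back under $\ev_{\underline M}$ to precisely the $\cL$-module $V(\underline M,\underline W)$ with $\underline W=(W_1,\dots,W_r)$. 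As the $M_i$ are distinct and the $W_i$ are weight modules, $V(\underline M,\underline W)$ is by definition an evaluation weight module and $V\cong V(\underline M,\underline W)$, as desired. I expect the heart of the argument — and the only step requiring a genuinely new idea — to be the semisimplicity in Step 2; the rest is bookkeeping together with standard structure theory of weight modules.
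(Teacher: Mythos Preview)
Your proof is correct, but the route you take in Step~2 is substantially more elaborate than the paper's. The paper bypasses the semisimplicity question entirely: rather than proving that each $\phi(h\ot\eps_i)$ is diagonalizable, it simply observes that the abelian subalgebra $\mathcal{H}=\fh\ot\Span\{\eps_1,\dots,\eps_r\}$ preserves each finite-dimensional weight space $V_\lambda$, and so by Lie's theorem (or just by triangularizing commuting operators) has a common eigenvector $v\in V_\lambda$. Since $V$ is simple, $V=U(\fg^{\oplus r})v$, and then $V$ is automatically an $\fh^{\oplus r}$-weight module because PBW monomials in root vectors shift the $\fh^{\oplus r}$-weight of $v$ by roots. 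This is a two-line argument replacing your Jordan-decomposition/intertwining/Schur machinery; the payoff of your approach is that it yields the stronger statement that $\ol{\fh}$ acts semisimply globally, not merely that some vector is a joint eigenvector, but that extra strength is not needed here.

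For Step~3, your parenthetical hint about isotypic components is not quite on target: in the weight-module setting one does not know a priori that $V$ is semisimple over $\fg^{(1)}$, so isotypic components need not be defined. The paper handles this carefully (Lemma~\ref{lm:submodules} and Proposition~\ref{prop:simples of sums}): one first shows that any weight module all of whose nonzero submodules meet a fixed weight $\lambda$ must contain a simple submodule, and then builds the tensor decomposition via the $\fg_2$-module $\Hom_{\fg_1}(A,V)$ for a simple $\fg_1$-submodule $A$. Your invocation of the result as ``standard'' is fair, but the sketch you gave would not actually go through as stated.
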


We will use the following lemma and proposition to prove this theorem.

\begin{lemma}\label{lm:submodules}
Let $H$ be a Cartan subalgebra of a semisimple Lie algebra $L$, and let $V$ be a weight module for $L$. Suppose that there exists $\lambda\in H^*$ such that the space 
$$W_\lambda=\{w\in W\ |\ hw=\lambda(h)w\hbox{\ for all\ }h\in H\}$$ 
is nonzero for all nonzero submodules $W\subseteq V$.  Then $V$ contains a simple $L$-submodule. 
\end{lemma}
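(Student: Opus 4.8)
The plan is to carve out, inside a suitably chosen submodule of $V$, a \emph{smallest} nonzero submodule --- which is then automatically simple. Since $V$ is a weight module, the weight space $V_\lambda$ is finite-dimensional, and hence so is $W_\lambda = W \cap V_\lambda$ for every submodule $W \subseteq V$; by hypothesis $\dim W_\lambda \geq 1$ whenever $W \neq 0$. So I would begin by choosing a nonzero submodule $W \subseteq V$ for which the positive integer $\dim W_\lambda$ is as small as possible.

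The crux is then the following consequence of minimality. If $W'$ is any nonzero submodule of $W$, then $W'$ is also a nonzero submodule of $V$, so $W'_\lambda \neq 0$ by hypothesis; since $W'_\lambda \subseteq W_\lambda$ and $\dim W_\lambda$ was chosen minimal, we get $W'_\lambda = W_\lambda$, and in particular $W_\lambda \subseteq W'$. Thus every nonzero submodule of $W$ contains the one fixed nonzero (finite-dimensional) subspace $W_\lambda$.

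Finally I would set $W_0$ to be the intersection of all nonzero submodules of $W$. By the previous step $W_\lambda \subseteq W_0$, so $W_0 \neq 0$, and $W_0$ is a submodule, being an intersection of submodules; hence it is literally the smallest nonzero submodule of $W$. Any nonzero submodule of $W_0$ is then a nonzero submodule of $W$, so it contains $W_0$ and therefore equals $W_0$ --- so $W_0$ is a simple $L$-submodule of $V$, which is what we want.

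The only delicate point is ensuring that the a priori large intersection $W_0$ does not vanish; this is exactly what the minimality step secures, and it is the one place where the weight-module hypothesis (finite-dimensionality of the weight spaces) is genuinely used. Note that the argument needs nothing about $L$ being semisimple or $H$ being a Cartan subalgebra beyond the existence of the weight decomposition of $V$.
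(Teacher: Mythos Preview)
Your proof is correct and follows essentially the same approach as the paper: both choose a nonzero submodule $W\subseteq V$ with $\dim W_\lambda$ minimal and deduce that every nonzero submodule of $W$ contains $W_\lambda$. The only cosmetic difference is that the paper then names the simple submodule explicitly as $U(L)W_\lambda$, whereas you take the intersection $W_0$ of all nonzero submodules of $W$; since each such submodule contains $W_\lambda$ (hence $U(L)W_\lambda$) and $U(L)W_\lambda$ is itself one of them, these two candidates coincide.
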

\begin{proof}
Let $W\subseteq V$ be a nonzero $L$-submodule for which $\dim W_\lambda$ is minimal.  We claim that $U(L)W_\lambda\subseteq W$ is a simple submodule.

Indeed, for any nonzero $m\in U(L)W_\lambda$, we see that the $\lambda$-weight space $(U(L)m)_\lambda$ of the submodule $U(L)m\subseteq W$ is equal to $W_\lambda$, by the minimality of $\dim W_\lambda$.  Thus $U(L)m$ contains $W_\lambda$, so $U(L)m=U(L)W_\lambda$, and $U(L)W_\lambda$ is a simple $L$-module.\qed
\end{proof}

\begin{proposition}\label{prop:simples of sums}
Let $\fg_1$ and $\fg_2$ be finite-dimensional semisimple Lie algebras, with Cartan subalgebras $\fh_1$ and $\fh_2$, respectively.  Suppose that $V$ is a simple weight module for $\fg_1\oplus \fg_2$, with respect to the Cartan subalgebra $\fh_1\oplus \fh_2$.  Then $V\cong V_1\ot V_2$ for some simple weight modules $V_i$ for $(\fg_i,\fh_i)$.
\end{proposition}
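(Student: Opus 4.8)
The plan is to reduce the problem to a standard fact: a simple module for a direct sum of Lie algebras is an outer tensor product of simple modules for the summands, and then upgrade this to the weight-module setting. First I would fix a weight $\mu$ of $V$ and decompose $\mu = \mu_1 + \mu_2$ with $\mu_i\in\fh_i^*$, so that the $\mu$-weight space $V_\mu$ for $\fh_1\oplus\fh_2$ is the intersection of the $\mu_1$-weight space for $\fh_1$ and the $\mu_2$-weight space for $\fh_2$. Pick a nonzero $v\in V_\mu$ and set $V_1 = U(\fg_1)v$ and $V_2 = U(\fg_2)v$. These are weight modules for $(\fg_1,\fh_1)$ and $(\fg_2,\fh_2)$ respectively, because $U(\fg_1)$ commutes with $\fh_2$ (and vice versa), so the $\fh_i$-action preserves each $V_i$; finite-dimensionality of the weight spaces of $V_i$ is inherited from $V$.

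Next I would construct the map $V_1\ot V_2\to V$. Since $\fg_1$ and $\fg_2$ commute inside $U(\fg_1\oplus\fg_2)=U(\fg_1)\ot U(\fg_2)$, the assignment $(a v)\ot(b v)\mapsto ab\,v$ for $a\in U(\fg_1)$, $b\in U(\fg_2)$ extends to a well-defined $(\fg_1\oplus\fg_2)$-module homomorphism $\theta:\ V_1\ot V_2\to V$; well-definedness is the point where one must check that the two $U(\fg_i)$-actions on $V$ are compatible, which is immediate from commutativity. The image of $\theta$ is $U(\fg_1)U(\fg_2)v = U(\fg_1\oplus\fg_2)v = V$ by simplicity of $V$, so $\theta$ is surjective.

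For injectivity, I would show that $V_1$ and $V_2$ are themselves simple. To do this, suppose $0\neq W_1\subsetneq V_1$ were a proper $\fg_1$-submodule; then $U(\fg_1\oplus\fg_2)W_1 = U(\fg_2)W_1$ is a $(\fg_1\oplus\fg_2)$-submodule of $V$, hence all of $V$ by simplicity, and intersecting with the appropriate weight space (or by a support/minimality argument in the spirit of Lemma \ref{lm:submodules}) one derives a contradiction with $W_1\neq V_1$. Concretely, I expect to argue that the $\fh_1$-weights appearing in $W_1$ are a proper subset of those in $V_1$, which contradicts $U(\fg_2)W_1 = V$ since $\fg_2$ acts trivially on $\fh_1$-weights. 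Once $V_1,V_2$ are simple, the kernel of $\theta$ is a proper $(\fg_1\oplus\fg_2)$-submodule of $V_1\ot V_2$; but any nonzero submodule of $V_1\ot V_2$ contains a pure tensor's worth of both factors — more precisely, the standard argument (pick $0\neq\sum x_i\ot y_i$ in the kernel with the $y_i$ linearly independent and minimal, act by $\fg_1$ to conclude each $x_i$ generates a submodule, hence $x_1\in$ kernel's projection, forcing the tensor to vanish) shows $V_1\ot V_2$ is simple. Hence $\ker\theta = 0$ and $\theta$ is an isomorphism.

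The main obstacle is the simplicity of the factors $V_i$ together with the simplicity of $V_1\ot V_2$: the first requires using that $\fg_2$ acts trivially on $\fh_1^*$ to control weight supports, and the second is the classical "tensor product of simples over commuting subalgebras is simple" lemma, which needs the base field to be algebraically closed (available here) so that $\End_{\fg_i}(V_i) = k$ in the relevant sense, or alternatively a direct linear-independence argument as sketched above. Everything else — well-definedness of $\theta$, surjectivity, the weight-space bookkeeping — is routine.
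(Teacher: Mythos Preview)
Your construction of the surjection $\theta:V_1\ot V_2\to V$ is fine, but the claim that $V_1=U(\fg_1)v$ and $V_2=U(\fg_2)v$ are \emph{simple} is where the argument breaks, and the claim is in fact false. Your proposed contradiction assumes that a proper $\fg_1$-submodule $W_1\subsetneq V_1$ must have strictly smaller $\fh_1$-weight support, but a proper submodule of a weight module can share the full weight support. Concretely, take $\fg_1=\fg_2=\mathfrak{sl}_3$, let $A=B$ be the adjoint representation, and set $V=A\ot B$. The zero weight space of $A$ is two-dimensional, spanned by linearly independent $h_1,h_2\in\fh_1$, and $v=h_1\ot h_2+h_2\ot h_1\in V_{(0,0)}$ is a weight vector. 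Since $\End_{\fg_1}(A)=k$ and $h_1,h_2$ are independent, one checks that $U(\fg_1)v=A\ot h_1\oplus A\ot h_2\cong A\oplus A$, which is not simple. Its diagonal submodule $W_1=\{a\ot(h_1+h_2):a\in A\}$ is proper, has exactly the same $\fh_1$-support as $V_1$, and still satisfies $U(\fg_2)W_1=A\ot B=V$; no contradiction arises.

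The paper never asserts that $U(\fg_1)v$ is simple. Instead it shows that every nonzero $\fg_1$-submodule $M\subseteq U(\fg_1)v$ has $M_\lambda\neq 0$ (via the surjectivity of the evaluation map $M\ot N\to V$ onto $V_{\lambda,\mu}$ for any nonzero $\fg_2$-submodule $N\subseteq\Hom_{\fg_1}(M,V)$), and then invokes Lemma~\ref{lm:submodules} to extract a simple $\fg_1$-submodule $A$. For the second factor it does not use $U(\fg_2)v$ at all, but rather takes a simple $\fg_2$-submodule $B\subseteq\Hom_{\fg_1}(A,V)$, found by the same lemma; the map $A\ot B\to V$, $a\ot\rho\mapsto\rho(a)$, is then an isomorphism because $A\ot B$ is simple. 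You already gesture toward Lemma~\ref{lm:submodules}, but its role is to \emph{produce} a simple submodule inside $U(\fg_1)v$, not to certify that the cyclic module itself is simple; once you accept that, the $\Hom_{\fg_1}(A,V)$ construction is what replaces your $V_2$ and makes the map land correctly.
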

\begin{proof}
Let $v\in V$ be a nonzero vector of weight $(\lambda,\mu)\in \fh_1^*\times\fh_2^*$.  Then $W=U(\fg_1)v\subseteq V$ is clearly a weight module for $(\fg_1,\fh_1)$, since $W_\eta$ is contained in the (finite-dimensional) weight space $V_{\eta,\mu}$ for all $\eta\in\fh_1^*$.

Let $M\subseteq W$ be any nonzero $\fg_1$-submodule.  Then $\hbox{Hom}_{\fg_1}(M,V)$ has the structure of a nonzero weight module over $\fg_2$, with action $(x\rho)(m)=x(\rho(m)),$ for all $x\in\fg_2$, $\rho\in\hbox{Hom}_{\fg_1}(M,V)$, and $m\in M$.  Let $N\subseteq \hbox{Hom}_{\fg_1}(M,V)$ be a nonzero $\fg_2$-submodule.  The map 
\begin{eqnarray*}
\Psi_{M,N}:\ M\ot N&\longrightarrow&V\\
m\ot\rho&\longmapsto&\rho(m)
\end{eqnarray*}
is a nonzero $\fg_1\oplus\fg_2$-module homomorphism, so is surjective by the simplicity of $V$.  In particular, $\Psi_{M,N}$ restricts to a surjection
$M_\lambda\ot N_\mu\longrightarrow V_{\lambda,\mu},$ where $M_\lambda$, $N_\mu$, and $V_{\lambda,\mu}$ are the weight spaces of weights $\lambda\in\fh_1^*$, $\mu\in\fh_2^*$, and $(\lambda,\mu)\in\fh_1^*\times\fh_2^*$, respectively.  Thus $M_\lambda$ and $N_\mu$ are nonzero (since $V_{\lambda,\mu}\neq 0$).

By Lemma \ref{lm:submodules}, there exists a nonzero simple (weight) $\fg_1$-submodule $A\subseteq W$ and a nonzero simple (weight) $\fg_2$-submodule $B$ of $\hbox{Hom}_{\fg_1}(A,V)$.  It is completely straightforward to verify that $A\ot B$ is a simple $\fg_1\oplus\fg_2$-module, so the surjective map
$$\Psi_{A,B}:\ A\ot B\longrightarrow V$$
is a $\fg_1\oplus\fg_2$-module isomorphism.\qed
\end{proof}

\bigskip

\noindent
{\bf Proof of Theorem \ref{thm:evaluations}}  Using the Chinese Remainder Theorem, choose elements $s_1,\ldots ,s_r$ of $S$ so that $s_i(M_j)=\delta_{ij}$ for all $i,j$.  Let $\mathcal{H}$ denote the finite-dimensional abelian Lie subalgebra $\fh\ot \hbox{Span}\,\{s_1,\ldots,s_r\}$ of $\cL$.  Since $\mathcal{H}$ commutes with $\fh\ot 1$, its action preserves the (finite-dimensional) weight spaces of $V$.  Let $V_\lambda$ be a (nonzero) weight space of $V$.  By Lie's theorem (or elementary linear algebra), $\mathcal{H}$ has a nonzero common eigenvector $v\in V_\lambda$.

Note that the evaluation map $\hbox{ev}_{\underline{M}}$ restricts to a vector space isomorphism 
$$\fg\ot\hbox{Span}\,\{s_i\}\rightarrow\fg_i:=0\oplus\cdots\oplus\fg\oplus\cdots\oplus 0,$$
where $\fg$ is the $i$th summand of the expression on the right.  Thus $v$ is a common eigenvector for the action of the Cartan subalgebra $\hbox{ev}_{\underline{M}}(\mathcal{H})=\fh^{\oplus r}$ of $\fg^{\oplus r}$.  The $\fg^{\oplus r}$-module $U(\fg^{\oplus r})v$ is a nonzero submodule of the simple $\fg^{\oplus r}$-module $V$, so $V=U(\fg^{\oplus r})v$, and $V$ is a simple weight module for $\fg^{\oplus r}$, with respect to the Cartan subalgebra $\fh^{\oplus r}$.  The result now follows from Proposition \ref{prop:simples of sums} by induction on $r$.\qed

\bigskip

\begin{remark} Conversely, by the surjectivity of the map $\hbox{ev}_{\underline{M}}$, it is clear that an evaluation module $V(\underline{M},\underline{W})$ is simple if $\underline{M}=(M_1,\ldots,M_r)$ and $\underline{W}=(W_1,\ldots,W_r)$, where $M_1,\ldots,M_r$ are distinct maximal ideals of $S$, and $W_1,\ldots,W_r$ are simple $\fg$-modules.
\end{remark}

It is also relatively straightforward to give an explicit isomorphism criterion for evaluation weight modules.  Let $\mathcal{M}$ be the set of all isomorphism classes of simple weight $(\fg,\fh)$-modules.  The modules in $\mathcal{M}$ can be described as cuspidals or simple quotients of parabolically induced modules \cite{fernando}.  As has been done in finite-dimensional contexts by many authors, evaluation weight representations can be labelled by functions $\Psi:\ \hbox{Max}\,S\rightarrow\mathcal{M}$.  In particular, the evaluation module $V(\underline{M},\underline{W})$ is labelled by the function $\Psi_{\underline{M},\underline{W}}$, where 
$$\Psi_{\underline{M},\underline{W}}(N)=\left\{\begin{array}{ll}
[W_i] & \hbox{\ if\ }N=M_i\\
\lbrack k \rbrack & \hbox{\ otherwise},
\end{array}\right.$$
where $\underline{M}=(M_1,\ldots,M_r)$, and $[k]$ is the isomorphism class of the trivial one-dimensional $\fg$-module.  The support $\hbox{supp}\,\Psi$ of such functions is always of finite cardinality, where
$$\hbox{supp}\,\Psi=\{M\in\hbox{Max}\,S\ | \Psi(M)\neq[k]\}.$$
The correspondence sending a simple evaluation weight module $V(\underline{M},\underline{W})$ to a finitely supported function $\Psi_{\underline{M},\underline{W}}:\ \Max\,S\rightarrow\mathcal{M}$ is clearly surjective, and we now verify that it descends to a well-defined surjective map on the level of isomorphism classes of simple evaluation weight modules.

Let $\alpha:\ \cL\rightarrow\hbox{End}\,V(\underline{M},\underline{W})$ and $\beta:\ \cL\rightarrow\hbox{End}\,V(\underline{N},\underline{U})$ be isomorphic simple evaluation weight representations of $\cL$, where $\underline{M}=(M_1,\ldots,M_r)$ and $\underline{N}=(N_1,\ldots,N_s)$.  Since $\fg$ is simple, we see that $$\fg\ot\bigcap_{i=1}^rM_i=\ker\alpha=\ker\beta=\fg\ot\bigcap_{j=1}^sN_j.$$
From the Nullstellensatz, it follows that $\{M_1,\ldots,M_r\}=\{N_1,\ldots,N_s\}$, so we can write $N_i=M_{\gs(i)}$ for some permutation $\gs$ of $\{1,\ldots,r\}$.  As in the finite-dimensional case (see \cite{NSS} or \cite{repforms}, for instance), it is now straightforward to verify that $U_i\cong W_{\gs(i)}$ as $(\fg,\fh)$-weight modules, so $\Psi_{\underline{M},\underline{W}}=\Psi_{\underline{N},\underline{U}}$.  There is thus a well-defined map $[V(\underline{M},\underline{W})]\mapsto\Psi_{\underline{M},\underline{W}}$ from isomorphism classes of simple evaluation weight representations to finitely supported functions $\Max\,S\rightarrow\mathcal{M}$.  The surjectivity of this correspondence is clear.  That this map is injective follows from the fact that $\cL$-modules form a symmetric monoidal category: that is, permuting the tensor factors $W_1,\ldots,W_r$ of an evaluation representation $V(\underline{M},\underline{W})$ will not change the isomorphism class of the $\cL$-module, provided we apply the same permutation to the sequence of maximal ideals $M_1,\ldots,M_r$ used in the evaluation.  We have now proved the following theorem:

\begin{theorem}\label{labelling-of-evalreps}
The isomorphism classes of simple evaluation weight $\cL$-modules are in natural bijection with the finitely supported functions $\Psi:\ \Max\,S\rightarrow\mathcal{M}.$  Explicitly, the isomorphism class of $V(\underline{M},\underline{W})$ corresponds to the function $\Psi_{\underline{M},\underline{W}}$.\qed
\end{theorem}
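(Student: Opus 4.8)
The plan is to establish that the assignment $[V(\underline M,\underline W)]\mapsto\Psi_{\underline M,\underline W}$ is a well-defined bijection by proving surjectivity and then the equivalence ``$V(\underline M,\underline W)\cong V(\underline N,\underline U)$ as $\cL$-modules if and only if $\Psi_{\underline M,\underline W}=\Psi_{\underline N,\underline U}$'', which simultaneously takes care of well-definedness and injectivity. Surjectivity is immediate: given a finitely supported $\Psi\colon\Max\,S\to\cM$ with support $\{M_1,\dots,M_r\}$, pick for each $i$ a simple weight $(\fg,\fh)$-module $W_i$ in the class $\Psi(M_i)$; by the observation following Theorem~\ref{thm:evaluations} that $V(\underline M,\underline W)$ is simple whenever the $M_i$ are distinct and the $W_i$ are simple, this $V(\underline M,\underline W)$ is a simple evaluation weight module, and $\Psi_{\underline M,\underline W}=\Psi$ by construction.

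For the equivalence, the ``if'' direction is the symmetric-monoidal observation: equality of the two functions forces $r=s$ and, after a simultaneous reordering of the maximal ideals and the tensor factors, $N_i=M_i$ and $U_i\cong W_i$, so the two modules are visibly isomorphic as $\cL$-modules. For ``only if'', I would first compare the kernels of the two $\cL$-actions: since $\fg$ is simple these are $\fg\ot\bigcap_iM_i$ and $\fg\ot\bigcap_jN_j$ respectively, so an isomorphism forces $\bigcap_iM_i=\bigcap_jN_j$; the Nullstellensatz then shows that the maximal ideals over this common ideal are exactly $\{M_i\}$ and exactly $\{N_j\}$, whence $r=s$ and $N_i=M_{\gs(i)}$ for a permutation $\gs$. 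Consequently both modules are pulled back along (up to the relabelling $\gs$ of summands) the single evaluation map $\ev_{\underline M}\colon\cL\to\fg^{\oplus r}$, so as $\fg^{\oplus r}$-modules they become $W_1\ot\cdots\ot W_r$ and a $\gs$-permuted copy of $U_1\ot\cdots\ot U_r$.

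It then remains to see that a simple $\fg^{\oplus r}$-module has its tensor factors uniquely determined up to isomorphism and order, which gives $U_i\cong W_{\gs(i)}$; this is the step I expect to be the main obstacle, since in the present setting the individual factors may have infinite weight multiplicities (Remark~\ref{infinite-mults}) and the usual finite-dimensional character/dimension arguments are unavailable. The substitute I would use is the $\Hom$-space technique from the proof of Proposition~\ref{prop:simples of sums}: writing $\fg^{\oplus r}=\fg_1\oplus\fg'$ with $\fg'=\fg_2\oplus\cdots\oplus\fg_r$, and using $\End_{\fg_1}(W_1)=k$ (Schur's lemma applies because everything in sight is countable-dimensional, exactly as in the proof of Theorem~\ref{centre-acts-as-zero}), one computes $\Hom_{\fg_1}(W_1,\,W_1\ot X)\cong X$ and $\Hom_{\fg_1}(W_1,\,U_1\ot Y)=0$ unless $W_1\cong U_1$, where $X=W_2\ot\cdots\ot W_r$ and $Y=U_2\ot\cdots\ot U_r$; an $\fg^{\oplus r}$-isomorphism thus forces $W_1\cong U_1$ and an $\fg'$-isomorphism $X\cong Y$, and induction on $r$ matches all the factors. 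Because every isomorphism used intertwines the $\fh^{\oplus r}$-action, these are isomorphisms of weight modules, so $\Psi_{\underline M,\underline W}=\Psi_{\underline N,\underline U}$, and assembling the three parts yields the natural bijection asserted in Theorem~\ref{labelling-of-evalreps}.
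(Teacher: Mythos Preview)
Your argument is correct and follows essentially the same route as the paper: surjectivity is clear, well-definedness comes from comparing kernels and invoking the Nullstellensatz to match the maximal ideals, and injectivity is the symmetric-monoidal observation. The only difference is that where the paper simply asserts (with a citation to \cite{NSS,repforms}) that $U_i\cong W_{\gs(i)}$, you supply the details via the $\Hom_{\fg_1}(W_1,-)$ computation and Schur's lemma; this is a natural elaboration of the same idea rather than a different approach.
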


As we have already noted in Remark \ref{infinite-mults}, evaluation weight modules do not always have finite-dimensional weight spaces, let alone satisfy the admissibility condition that the dimensions of these weight spaces be uniformly bounded.  We now turn our attention to the question of which evaluation weight modules are admissible.  The answer is surprisingly simple: an evaluation weight module $V(\underline{M},\underline{W})$ is admissible if and only if each component $W_i$ of $\underline{W}=(W_1,\ldots,W_\ell)$ is an admissible $\fg$-module, and at most one $W_i$ is infinite dimensional.  It is interesting to note that by \cite{BBL}, simple infinite-dimensional admissible modules exist only for $\fg$ of types A and C, so every simple admissible evaluation weight module is finite dimensional if $\fg$ is not of one of these types.

We now review a few results we will need from \cite{fernando}, \cite{BrLe}, and \cite{BBL}.  Let $W$ be a simple weight module for $\fg$.  We write $T=\{\ga\in\Phi\ |\ x_\ga\hbox{\ acts injectively on\ }W\}$ and $N=\{\ga\in\Phi\ |\ x_\ga\hbox{\ acts locally nilpotently on\ }W\}$.  By a lemma of Fernando \cite[Lemma 2.3]{fernando}, the root system $\Phi$ decomposes as $\Phi=T\cup N$, and $T$ is a convex set.  That is, $T=\{\sum_{\ga\in T}c_\ga\ga\ |\ c_\ga\geq 0\}\cap \Phi.$  For each $X\subseteq \Phi$, we write $-X$ for the set $\{-\ga\ |\ \ga\in X\}\subseteq\Phi$, and $X^+$ (respectively, $X^-$) for the positive (resp., negative) roots in $X$, relative to a fixed base of simple roots.  Define 
\begin{equation}\label{TandNsets}
T_s=T\cap(-T),\ T_a=T\setminus T_s,\ N_s=N\cap(-N),\ N_a=N\setminus N_s.
\end{equation}
Clearly, $N_a=-T_a$.  Suppose that $X\subseteq X'$ are subsets of $\Phi$.  If $\ga+\beta\in X$ whenever $\ga\in X$, $\beta\in X'$, and $\ga+\beta\in\Phi$, then we say that $X$ is an {\em ideal} of $X'$.

\begin{proposition}{\bf \cite{BrLe,BBL}} \label{BBL-prop}

(1)\quad $T_s$ and $N_s$ are root subsystems of $\Phi$.

\smallskip

(2)\quad There exists a base $B$ of $\Phi$ such that $N_a\subseteq \Phi_B^+$, where $\Phi_B^+$ (respectively, $\Phi_B^-$) is 

\quad\quad \ the set of positive (resp., negative) roots relative to $B$.

\smallskip

(3)\quad If $B$ is any base such that $N_a\subseteq\Phi_B^+$, then $N_a$ is an ideal of $\Phi_B^+$, $T_a$ is an ideal 

\quad\quad \ of $\Phi_B^-$, and $B\cap T_s$ is a base of simple roots for the root subsystem $T_s$.\qed
\end{proposition}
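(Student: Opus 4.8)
The plan is to prove Proposition \ref{BBL-prop} by reducing everything to the $\mathfrak{sl}_2$-theory of weight modules and the combinatorics of root strings, following the approach of \cite{BrLe,BBL}. The starting point is Fernando's lemma: $\Phi = T \cup N$, and $T$ is convex. First I would record the two basic $\mathfrak{sl}_2$ facts that drive the whole argument. For a root $\ga$ and the associated $\mathfrak{sl}_2$-triple $\{x_\ga, x_{-\ga}, h_\ga\}$, if $x_\ga$ acts injectively on the simple weight module $W$ then so does $x_{-\ga}$ \emph{unless} $W$ is a highest-weight-type module in the $\ga$-direction; more precisely, on a simple weight module either $x_\ga$ is injective or $x_\ga$ is locally nilpotent (this is exactly the dichotomy $\Phi = T\cup N$), and the ``injective'' property is governed by the $\ga$-string through each weight. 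The second fact is the commutator estimate: if $x_\ga$ acts injectively and $x_\gb$ acts locally nilpotently and $\ga+\gb\in\Phi$, then applying $[x_\ga,x_\gb]$ and using $\mathfrak{sl}_2$-string combinatorics on the $\ga$- and $\gb$-strings forces $x_{\ga+\gb}$ to act injectively as well; this is the engine behind convexity of $T$ and will reappear in the ideal statements.

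For part (1), to see $T_s = T\cap(-T)$ is a subsystem, the only non-obvious closure is: if $\ga,\gb\in T_s$ and $\ga+\gb\in\Phi$, then $\ga+\gb\in T_s$. But $T$ is convex, so $\ga+\gb\in T$; and $-\ga,-\gb\in T$ as well, so $-(\ga+\gb)\in T$, giving $\ga+\gb\in T\cap(-T) = T_s$. The same argument applied to $N$ (which is also convex, being $-T$ up to the symmetric parts — more carefully, $N_s = N\cap(-N)$ and one checks $N$ is closed in the relevant sense, or one simply notes $N_s = \Phi\setminus(T_a\cup N_a\cup T_s)$ and uses that $T_a,N_a,T_s$ behave well) shows $N_s$ is a subsystem. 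Since $N_a = -T_a$ is immediate from the definitions ($T_a = T\setminus T_s$, and $\ga\in T_a$ iff $\ga\in T$ and $-\ga\notin T$ iff $-\ga\in N$ and $-(-\ga)\notin N$ iff $-\ga\in N_a$), the bookkeeping here is routine.

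For part (2), I would produce the base $B$ by a linear-functional argument: since $T$ is convex and $T_a\cap(-T_a)=\emptyset$, the cone spanned by $T_a$ is ``pointed'' in the sense that $T_a$ lies in an open half-space; choose $\xi\in\fh^*_{\mathbb R}$ generic with $\xi(\ga)<0$ for all $\ga\in T_a$ (equivalently $\xi(\ga)>0$ for all $\ga\in N_a=-T_a$), and perturb $\xi$ to be regular so it determines a base $B$ of $\Phi$ with $\Phi_B^+ = \{\ga\in\Phi : \xi(\ga)>0\}$. Then $N_a\subseteq\Phi_B^+$ by construction. The point requiring care is that such a $\xi$ exists, i.e. that $T_a$ and $-T_a$ can be separated — this follows from convexity of $T$ together with the fact that $0\notin T$ and $T\cap(-T) = T_s$ is the ``non-strict'' part, so $T_a$ generates a salient cone; a standard separation argument finishes it.

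For part (3), fix any base $B$ with $N_a\subseteq\Phi_B^+$. To show $N_a$ is an ideal of $\Phi_B^+$, take $\ga\in N_a$, $\gb\in\Phi_B^+$ with $\ga+\gb\in\Phi$; I must show $\ga+\gb\in N_a$. If $\gb\in N$ then $N$-convexity (or the contrapositive of $T$-convexity) gives $\ga+\gb\in N$, and $\ga+\gb\in\Phi_B^+$ since both $\ga,\gb$ are, so $\ga+\gb\in N_a$. If instead $\gb\in T$, then since $\ga+\gb\in\Phi_B^+$ and $\gb\in\Phi_B^+$, we cannot have $\ga+\gb\in T_a$ (that would put $-(\ga+\gb)\in N_a\subseteq\Phi_B^+$, contradicting $\ga+\gb\in\Phi_B^+$)... here I must use the $\mathfrak{sl}_2$ commutator estimate instead: $x_\ga$ acts locally nilpotently and $x_\gb$ injectively, and a root-string computation with $[x_\ga,\cdot]$ applied along the $\gb$-string shows $x_{\ga+\gb}$ acts locally nilpotently, so $\ga+\gb\in N$, hence $\ga+\gb\in N_a$ as before. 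The statement that $T_a$ is an ideal of $\Phi_B^-$ is the image of this under $\ga\mapsto-\ga$. Finally, $B\cap T_s$ is a base for $T_s$: since $T_s$ is a subsystem (part 1) and $B\cap T_s$ consists of $T_s$-positive roots that cannot be written as sums of two $T_s$-positive roots (inheriting indecomposability from $B$), a dimension/spanning check shows $B\cap T_s$ has the right cardinality and spans, so it is a base. The main obstacle is the hybrid case in part (3) where $\ga\in N_a$ and $\gb\in T_s$ — one genuinely needs the $\mathfrak{sl}_2$ root-string combinatorics (tracking where $x_{\ga+\gb}$ sends a weight vector, using the $\gb$-ladder to produce a vector on which $x_{\ga+\gb}^n$ vanishes) rather than pure convexity; I expect that to be the technical heart, and it is precisely the kind of argument carried out in \cite{BrLe,BBL}, so I would cite those papers for the detailed string estimates and present the convexity-based reductions in full.
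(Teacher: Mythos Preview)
The paper does not prove this proposition at all: it is stated with a terminal $\qed$ and attributed to \cite{BrLe,BBL}, so there is no ``paper's own proof'' to compare against. Your sketch is therefore being judged on its own merits, and it contains a genuine error.

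Your ``commutator estimate'' --- that if $x_\ga$ acts injectively and $x_\gb$ locally nilpotently with $\ga+\gb\in\Phi$, then $x_{\ga+\gb}$ acts injectively --- is false, as is the version with the roles reversed that you invoke in part~(3). In the Verma module for $\mathfrak{sl}_3$ with simple roots $\ga,\gb$, take the injective root vector $x_{-\ga}$ and the locally nilpotent $x_{\ga+\gb}$: their bracket is a nonzero multiple of $x_\gb$, which is locally nilpotent, not injective. So the ``$\mathfrak{sl}_2$ root-string computation'' you appeal to in the hybrid case of part~(3) does not exist, and the step ``$\ga+\gb\in N\cap\Phi_B^+$, so $\ga+\gb\in N_a$'' in your other case is also unjustified (elements of $N_s^+$ lie in $N\cap\Phi_B^+$ but not in $N_a$).

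Both gaps are repairable by convexity alone, which is the actual engine here. First observe that $\gb\in T\cap\Phi_B^+$ forces $\gb\in T_s$, since $T_a\subseteq\Phi_B^-$. Now for $\ga\in N_a$ and $\gb\in T_s$: if $\ga+\gb\in T$ then $\ga=(\ga+\gb)+(-\gb)\in T$ by convexity, a contradiction; so $\ga+\gb\in N$, and since $-\ga,-\gb\in T$ convexity gives $-(\ga+\gb)\in T$, hence $\ga+\gb\in N_a$. For $\ga\in N_a$ and $\gb\in N_s$: closedness of $N$ gives $\ga+\gb\in N$; if $\ga+\gb\in N_s$ then (using that $N_s$ is a subsystem from part~(1)) $-\ga=(-(\ga+\gb))+\gb\in N_s$, contradicting $-\ga\in T_a$. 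The case $\gb\in N_a$ is similar. So no representation-theoretic string argument is needed in part~(3); the ``technical heart'' you anticipate is not there. Your sketch of part~(2) via a separating hyperplane is the right idea but also leaves the salience of the cone over $T_a$ unproved; the standard route is to show $T\cup N_s$ (equivalently its negative) is a parabolic subset of $\Phi$ and invoke the Bourbaki correspondence between parabolic subsets and bases.
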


\noindent
The following lemma and its proof are a small generalisation of \cite[Lemma 4.7(iii)]{BBL}.

\begin{lemma}\label{lemma0}
If $N_a\subseteq\Phi_B^+$, $\ga\in N_s$, and $\beta\in T_s$, then $\ga+\beta$ is {\em not} a root.
\end{lemma}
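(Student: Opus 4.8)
The plan is to argue by contradiction using the convexity of $T$ and the $\mathfrak{sl}_2$-theory for the root string through $\beta$. Suppose $\ga\in N_s$, $\beta\in T_s$, and $\ga+\beta\in\Phi$. Since $\beta\in T_s=T\cap(-T)$, both $x_\beta$ and $x_{-\beta}$ act injectively on $W$. The key observation is that the $\beta$-root string through $\ga+\beta$ cannot consist entirely of roots in $N$: indeed, $\ga=(\ga+\beta)-\beta$ lies in $N_s$, and I claim that $\ga+\beta$ must then lie in $T$. To see this, I would apply convexity of $T$ together with the fact that $N_a=-T_a$ and the description in Proposition \ref{BBL-prop}(3): since $N_a\subseteq\Phi_B^+$ is an ideal of $\Phi_B^+$ and $\ga\in N_s=N\cap(-N)$, the root $\ga$ is in the span of $B\cap N_s$; one then checks using the ideal structure that adding $\beta\in T_s$ to $\ga$ cannot land back in $N_a$, and it cannot land in $N_s$ either (by the subsystem property of Proposition \ref{BBL-prop}(1), since $N_s$ is a root subsystem and $\beta\notin N_s$ would force $\ga+\beta\notin N_s$ via standard root-subsystem arguments). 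Hence $\ga+\beta\in T$.

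Now comes the $\mathfrak{sl}_2$ input. Having $\ga+\beta\in T$ means $x_{\ga+\beta}$ acts injectively on $W$. Consider the $\beta$-string through $\ga+\beta$: it is $\{\ga+\beta-p\beta,\ldots,\ga+\beta+q\beta\}$ for some $p,q\ge 0$, and it contains $\ga$ (take the $p\ge 1$ step). I would pick a nonzero weight vector $w\in W$ of some weight $\mu$ and track how $x_\beta$, $x_{-\beta}$, and the root vectors along the string interact. The point is that $x_{-\beta}$ acts injectively (as $-\beta\in T_s$), so no vector is killed by $x_{-\beta}$; meanwhile $x_\ga$ acts locally nilpotently ($\ga\in N$). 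Using the standard commutator identities $[x_\beta,x_\ga]\in\fg_{\ga+\beta}$ (a nonzero multiple of $x_{\ga+\beta}$, since $\ga+\beta\in\Phi$) and $[x_{-\beta},x_{\ga+\beta}]\in\fg_\ga$ (a nonzero multiple of $x_\ga$), I would derive a contradiction: applying a high power of $x_\ga$ to a generic vector gives zero by local nilpotence, but rewriting $x_\ga$ via $[x_{-\beta},x_{\ga+\beta}]$ and using the injectivity of $x_{-\beta}$ and $x_{\ga+\beta}$ shows that $x_\ga^n w \ne 0$ for a suitable $w$ and all $n$, which is the contradiction.

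More concretely, I expect the cleanest route is: choose $w\ne 0$ with $x_\beta w=0$ or, failing that, work within the finite-dimensional $\mathfrak{sl}_2^{(\beta)}$-submodule generated by a weight vector and use that $x_{\ga+\beta}$ and $x_\ga$ differ by conjugation under the Weyl-group element (reflection $s_\beta$) implemented by $\exp$-operators; injectivity is preserved under such conjugation when restricted appropriately, but local nilpotence is not, giving the clash. Alternatively, one can invoke Lemma 2.3 of \cite{fernando} directly: $T$ is convex, and if $\ga\in N$ while $\ga+\beta\in T$ and $-\beta\in T$, then $\ga=(\ga+\beta)+(-\beta)$ is a nonnegative (in fact, with coefficient $1$ each) combination of elements of $T$, forcing $\ga\in T$, hence $\ga\in T\cap N$; but the only way $\ga$ lies in both is impossible unless we are careful—actually $\Phi=T\cup N$ allows overlap only in a controlled way, and the contradiction is that $\ga\in N_s$ means $-\ga\in N$, so if also $\ga\in T$ then $\ga\in T_a$ (as $\ga\notin -T$ would need checking) and then $-\ga\in N_a=-T_a$, consistent—so the purely combinatorial argument needs the injectivity/nilpotence dichotomy on $W$ itself, not just the root-set partition.

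\textbf{Main obstacle.} The delicate point is pinning down \emph{why} $\ga+\beta\in\Phi$ forces $\ga+\beta\in T$ rather than $N$: the root-set partition $\Phi=T\cup N$ is not a disjoint partition in general ($T_s$ and $N_s$ can be related), so I must use the ideal structure of Proposition \ref{BBL-prop}(3) carefully to rule out $\ga+\beta\in N_a$ (impossible since $N_a=-T_a\subseteq\Phi_B^+$ and adding $\beta\in T_s$ to $\ga\in N_s$ stays in $\mathrm{Span}(B\cap N_s)\oplus\mathrm{Span}(B\cap T_s)$, disjoint from the ``pure'' $N_a$ part) and $\ga+\beta\in N_s$ (ruled out by $N_s$ being a subsystem with base $B\cap N_s$ not containing $\beta$). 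Once $\ga+\beta\in T$ is established, the $\mathfrak{sl}_2$-contradiction is routine, following the template of \cite[Lemma 4.7(iii)]{BBL}.
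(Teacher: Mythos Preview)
Your proposal has a genuine gap rooted in a misconception: you treat the decomposition $\Phi=T\cup N$ as if overlap were possible (``$\Phi=T\cup N$ allows overlap only in a controlled way''). It is not. On any nonzero module, a root vector acting injectively cannot act locally nilpotently, so $T\cap N=\emptyset$. This single observation collapses the entire $\mathfrak{sl}_2$ apparatus you set up: once you have $\ga+\beta\in T$, convexity of $T$ gives $\ga=(\ga+\beta)+(-\beta)\in T$ (since $-\beta\in T_s\subseteq T$), which immediately contradicts $\ga\in N$. You actually write this argument down, then talk yourself out of it.

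The paper's proof is therefore a short three-case combinatorial argument with no representation-theoretic input beyond the definitions and Fernando's convexity lemma. If $\ga+\beta\in T$, the convexity argument above gives the contradiction. If $\ga+\beta\in N_s$, then since $N_s$ is a root subsystem and $-\ga\in N_s$, one gets $\beta=(\ga+\beta)+(-\ga)\in N_s$, contradicting $\beta\in T$. If $\ga+\beta\in N_a$, then $-(\ga+\beta)\in T_a\subseteq T$, and convexity gives $-\ga=-(\ga+\beta)+\beta\in T$, contradicting $-\ga\in N$ (i.e.\ $\ga\in N_s$). Your handling of the $N_a$ case via ``$\ga\in N_s$ lies in $\mathrm{Span}(B\cap N_s)$'' is also unjustified: Proposition~\ref{BBL-prop}(3) asserts that $B\cap T_s$ is a base for $T_s$, not that $B\cap N_s$ is a base for $N_s$, so you cannot conclude $\ga$ is supported only on $B\cap N_s$. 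The convexity route avoids this entirely.
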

\proof Suppose $\ga+\beta\in\Phi$.  By \cite[Lemma 2.3]{fernando}, $\ga+\beta\in T$ or $\ga+\beta\in N$.  If $\ga+\beta\in T$, then $\ga=(\ga+\beta)+(-\beta)\in T$, since $-\beta\in T_s\subseteq T$ and $T$ is a convex subset of $\Phi$.  This is a contradiction since $\ga\in N$, so we see that $\ga+\beta\in N_s$ or $\ga+\beta\in N_a$.

If $\ga+\beta\in N_s$, then $\beta=(\ga+\beta)+(-\ga)\in N_s$, since $N_s$ is a root subsystem of $\Phi$.  But then $\beta\in N_s\cap T$, a contradiction.  Hence $\ga+\beta\in N_a$.  But then $-(\ga+\beta)\in T_a$ and $-\ga=-(\ga+\beta)+\beta\in T$ by the convexity of $T$.  Thus $\ga\notin N_s$, a contradiction.  Hence $\ga+\beta$ is not a root.\qed

\begin{lemma}\label{lemma1} Suppose that $W_1$ and $W_2$ are simple infinite-dimensional admissible $\fg$-modules with $T_1\cap(\pm T_2)\neq\emptyset$, where $T_1$ and $T_2$ are the sets of roots $\ga$ for which $x_\alpha$ acts injectively on $W_1$ and $W_2$, respectively.  Then $W_1\ot W_2$ is {\em not} an admissible $\fg$-module.
\end{lemma}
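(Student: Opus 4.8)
The plan is to locate, for a root $\ga$ lying in $T_1\cap(\pm T_2)$, a subalgebra isomorphic to $\mathfrak{sl}_2$ inside $\fg$ on which both tensor factors behave "injectively" in the relevant direction, and then to exhibit weight spaces of $W_1\ot W_2$ of unbounded dimension by a counting argument along a string of weights. First I would reduce to the case $\ga\in T_1\cap T_2$: replacing $W_2$ by its dual (which is again simple, infinite-dimensional, and admissible, and interchanges the roles of $T_2$ and $-T_2$) turns $\ga\in T_1\cap(-T_2)$ into the other case, and $W_1\ot W_2$ is admissible iff $W_1\ot W_2^*$ is, since admissibility of a tensor product is a statement about dimensions of weight spaces and $\dim(W_1\ot W_2)_\lambda = \sum_{\mu}\dim (W_1)_{\lambda-\mu}\dim(W_2)_\mu$ is symmetric enough under dualizing the second factor. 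So assume $x_\ga$ acts injectively on both $W_1$ and $W_2$.

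The core of the argument is then purely an $\mathfrak{sl}_2$-combinatorics computation in the $\mathfrak{sl}_2$-triple $\{x_\ga, x_{-\ga}, h_\ga\}$. Restrict $W_1$ and $W_2$ to this $\mathfrak{sl}_2$. Since $x_\ga$ acts injectively on $W_i$, the weights of $W_i$ (as $h_\ga$-eigenvalues) form at least one two-sided-infinite arithmetic progression with step $2$; in particular, within a fixed congruence class mod $2$, there is a family of weight spaces of $W_i$ that is nonzero for all sufficiently negative values of the $h_\ga$-weight and whose dimensions are bounded below by a fixed constant $c_i\ge 1$ (admissibility gives a uniform upper bound $B_i$, but I only need the lower bound here). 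Now in $W_1\ot W_2$, the $h_\ga$-weight space of weight $m$ receives contributions $\sum_{a+b=m}(W_1)_a^{(\ga)}\ot (W_2)_b^{(\ga)}$; choosing $m$ in the appropriate congruence class and letting $m\to -\infty$, the number of pairs $(a,b)$ with $a+b=m$, $a$ a weight of $W_1$, $b$ a weight of $W_2$ grows without bound (roughly linearly in $|m|$), since each of $W_1,W_2$ has infinitely many weights on a downward ray. Hence $\dim(W_1\ot W_2)_m^{(\ga)}\to\infty$, so already the $h_\ga$-grading of $W_1\ot W_2$ has unbounded "weight" multiplicities. Finally I would upgrade this to a statement about genuine $\fh$-weight spaces: a single $\fh$-weight space of $W_i$ is finite dimensional, and an $h_\ga$-eigenspace of $W_i$ is a finite or infinite sum of $\fh$-weight spaces, but by a pigeonhole/convexity observation (the $\fh$-weights of $W_i$ with a given $h_\ga$-value, as one moves down the $\ga$-string, eventually stabilize onto finitely many $\fh$-weight-lines, using that $W_i$ has bounded $\fh$-multiplicities and finitely many "layers"), one can extract an actual $\fh$-weight $\lambda_m$ of $W_1\ot W_2$ whose multiplicity still tends to infinity. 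Thus $W_1\ot W_2$ is not admissible.

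The main obstacle I expect is precisely this last step — passing from the coarse $h_\ga$-grading back to the fine $\fh$-weight grading. Bounding dimensions of $h_\ga$-eigenspaces is easy, but those eigenspaces decompose over possibly infinitely many $\fh$-weights, so unbounded $h_\ga$-multiplicity does not immediately contradict admissibility; one must argue that the unboundedness "concentrates" on a single $\fh$-weight-line. The cleanest way is likely to pick a generic linear functional (e.g. $h_\ga$ itself, or a small perturbation) so that its eigenspaces on $W_1$ and $W_2$ are each finite dimensional — this is possible because $W_i$ has only finitely many "asymptotic directions" of weights (the weights of a simple weight module lie in finitely many cosets of the root lattice inside a single $\Phi$-coset, a consequence of Fernando's structure theory, Proposition \ref{BBL-prop}) — and then run the same counting argument with this functional in place of $h_\ga$, so that bounded multiplicities for the chosen functional already give the contradiction. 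I would flag that the choice of functional must be compatible with $\ga$ acting injectively on both factors, which is exactly what the hypothesis $T_1\cap(\pm T_2)\ne\emptyset$ supplies.
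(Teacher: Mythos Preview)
Your counting idea along an $\alpha$-string is exactly what the paper uses, but you have routed it through the coarse $h_\alpha$-grading and thereby created the very obstacle you flag at the end. The paper's argument stays at the level of $\fh$-weights throughout, and this is the point you are missing: injectivity of $x_\alpha$ on a weight module $W$ is already an $\fh$-graded statement, since $x_\alpha^n$ maps $W_\lambda$ injectively into $W_{\lambda+n\alpha}$, so $W_\lambda\neq 0$ forces $W_{\lambda+n\alpha}\neq 0$ for every $n\geq 0$. With this in hand, the two cases are immediate and there is no need to reduce one to the other. If $\alpha\in T_1\cap(-T_2)$, pick nonzero $W_{1,\lambda}$ and $W_{2,\mu}$; then $W_{1,\lambda+n\alpha}\neq 0$ and $W_{2,\mu-n\alpha}\neq 0$ for all $n\geq 0$, and the single $\fh$-weight space $(W_1\ot W_2)_{\lambda+\mu}$ already contains $\sum_{n\geq 0}W_{1,\lambda+n\alpha}\ot W_{2,\mu-n\alpha}$, hence is infinite dimensional. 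If $\alpha\in T_1\cap T_2$, the same choice shows that $(W_1\ot W_2)_{\lambda+\mu+n\alpha}$ contains $\sum_{\ell=0}^n W_{1,\lambda+\ell\alpha}\ot W_{2,\mu+(n-\ell)\alpha}$, of dimension at least $n+1$.

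Two further issues with your write-up. First, your dualization step is not justified: for the ordinary graded dual, $x_\alpha$ acts injectively on $W_2^\ast$ iff $x_\alpha$ acts surjectively between consecutive weight spaces of $W_2$, which is not the same as $x_{-\alpha}$ acting injectively; and the claim that $W_1\ot W_2$ is admissible iff $W_1\ot W_2^\ast$ is admissible compares a convolution with a correlation and is not obvious. Second, the direction of your ray is backwards: injectivity of $x_\alpha$ pushes weights toward $+\alpha$, so one should send the weight to $+\infty$ along $\alpha$, not $-\infty$, and there is no reason for the $h_\alpha$-weights of $W_i$ to form a two-sided infinite progression (that would require $\alpha\in T_{i,s}$, which is not assumed). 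None of these repairs are needed once you work directly with $\fh$-weights as above.
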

\proof Choose a pair of nonzero weight spaces $W_{1,\lambda}\subseteq W_1$ and $W_{2,\mu}\subseteq W_2$.  Suppose that $\ga\in T_1\cap(-T_2)$.  Then the $(\lambda+\mu)$-weight space of $W_1\ot W_2$ contains the infinite-dimensional space
$$\sum_{n=0}^\infty W_{1,\lambda+n\alpha}\ot W_{2,\mu-n\alpha},$$
so $W_1\ot W_2$ is not admissible (nor is it even a weight module).

Now assume that $\ga\in T_1\cap T_2$.  Then the $(\lambda+\mu+n
\alpha)$-weight space of $W_1\ot W_2$ contains the sum
$$\sum_{\ell=0}^n W_{1,\lambda+\ell\ga}\ot W_{2,\mu+(n-\ell)\ga},$$
which has dimension at least $n+1$.  Since this computation is valid for any positive integer $n$, there is no uniform bound on the dimensions of the weight spaces of $W_1\ot W_2$.\qed

\begin{lemma}\label{lemma-2}  Let $\ga$ be a simple root in a base $B$, and let $\langle -\ga\rangle$ be the ideal generated by $-\ga$ in the set $\Phi^-$ of negative roots.  Suppose that $\beta\in\Phi\setminus\pm\langle -\ga\rangle$.  Then there exists $\gamma\in\langle-\ga\rangle$ such that $\beta+\gamma\in\langle-\ga\rangle$.
\end{lemma}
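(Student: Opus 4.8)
The plan is to first describe $\langle-\ga\rangle$ explicitly (with all coefficients taken relative to $B$), then read off what the hypothesis on $\beta$ means in terms of simple-root coefficients, and finally build $\gamma$ using $\mathfrak{sl}_2$-string combinatorics and the connectedness of the Dynkin diagram. For \emph{Step 1}, I would prove that
\[
\langle-\ga\rangle=\{\,\delta\in\Phi^-\ :\ \text{the coefficient of }\ga\text{ in }\delta\text{ is }\le-1\,\}.
\]
The inclusion $\subseteq$ is immediate: the right-hand side contains $-\ga$, lies in $\Phi^-$, and is an ideal of $\Phi^-$, since adding a negative root can only decrease the $\ga$-coefficient. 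For $\supseteq$, I would induct on $|\mathrm{ht}(\delta)|$: if this is $1$ then $\delta=-\ga$; otherwise $-\delta$ is a positive root of height $\ge2$, so $\delta+\alpha_i\in\Phi^-$ for some simple root $\alpha_i$, and a short case analysis — $\alpha_i\neq\ga$; or $\alpha_i=\ga$ with $\ga$-coefficient of $\delta$ at most $-2$; or $\alpha_i=\ga$ with that coefficient exactly $-1$ — either passes to a negative root of smaller height still in the displayed set, or writes $\delta=(-\ga)+(\delta+\ga)$ with $\delta+\ga\in\Phi^-$; the ideal property of $\langle-\ga\rangle$ then closes the induction. Negating, $-\langle-\ga\rangle=\{\delta\in\Phi^+:\ \ga\text{-coefficient of }\delta\ge1\}$, and since every root has all simple-root coefficients of one sign, $\pm\langle-\ga\rangle$ is exactly the set of roots with nonzero $\ga$-coefficient. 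Hence the hypothesis $\beta\in\Phi\setminus\pm\langle-\ga\rangle$ says precisely that the $\ga$-coefficient of $\beta$ is $0$.

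\noindent\emph{Step 2: reduce to an $\mathfrak{sl}_2$-string condition.} It suffices to produce a positive root $\delta$ whose $\ga$-coefficient is $\ge1$ and with $\langle\delta,\beta^\vee\rangle\ge1$, where $\beta^\vee$ is the coroot of $\beta$. For such a $\delta$, the $\beta$-string through $\delta$ descends at least one step — note $\delta$ and $\beta$ are not proportional, as their $\ga$-coefficients differ — so $\delta-\beta\in\Phi$, and this root has $\ga$-coefficient $\ge1$, hence is positive. Put $\gamma:=-\delta$. Then $\gamma\in\Phi^-$ has $\ga$-coefficient $\le-1$, so $\gamma\in\langle-\ga\rangle$ by Step 1; and $\beta+\gamma=-(\delta-\beta)\in\Phi^-$ has $\ga$-coefficient $\le-1$, so $\beta+\gamma\in\langle-\ga\rangle$ as well.

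\noindent\emph{Step 3: produce $\delta$.} Suppose no such $\delta$ exists, so every element of $R:=\{\delta\in\Phi^+:\ \ga\text{-coefficient of }\delta\ge1\}$ satisfies $\langle\delta,\beta^\vee\rangle\le0$. Since $\beta$ has $\ga$-coefficient $0$, the reflection $s_\beta$ preserves $\ga$-coefficients, so it maps $R$ into itself; being an involution, $s_\beta(R)=R$. Then for $\delta\in R$ we have $s_\beta\delta\in R$, hence $0\ge\langle s_\beta\delta,\beta^\vee\rangle=-\langle\delta,\beta^\vee\rangle\ge0$, forcing $\langle\delta,\beta^\vee\rangle=0$ for every $\delta\in R$. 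But $R$ spans $\fh^*$: because $\fg$ is simple its Dynkin diagram is connected, so for each simple root $\alpha_j$ there is a path $\ga=\alpha_{j_0},\alpha_{j_1},\dots,\alpha_{j_t}=\alpha_j$ in it, and every partial sum $\alpha_{j_0}+\dots+\alpha_{j_s}$ is a root with $\ga$-coefficient $1$, hence lies in $R$; taking consecutive differences puts $\alpha_{j_0},\dots,\alpha_{j_t}$, and in particular $\alpha_j$, into $\Span R$. Thus $\beta^\vee$ would be orthogonal to all of $\fh^*$, i.e.\ $\beta^\vee=0$, which is absurd. So the desired $\delta$ exists, and Step 2 finishes the proof.

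\noindent\emph{Main obstacle.} Steps 1 and 2 are essentially bookkeeping (the hard inclusion in Step 1 needs care with the repeated appearances of $\ga$); the real content is Step 3, and in particular the observation that the whole lemma reduces to finding a root of the nilradical of the maximal parabolic attached to $\ga$ that is not right-orthogonal to $\beta$. The degenerate alternative — all such roots orthogonal to $\beta$ — cannot be excluded by any reasoning local to $\ga$; it is exactly here that simplicity of $\fg$ (connectedness of the Dynkin diagram), hence the spanning property of $R$, is genuinely required.
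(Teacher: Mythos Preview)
Your proof is correct and shares the characterisation $\langle-\ga\rangle=\{\tau\in\Phi:n_\ga^\tau<0\}$ with the paper, but the key step is organised differently. The paper builds an explicit chain $\ga=\mu_0\prec\mu_1\prec\cdots\prec\mu_r=\theta$ of roots in $-\langle-\ga\rangle$ with simple differences, notes that every simple root occurs as some $\mu_i-\mu_{i-1}$, and uses nondegeneracy of the Killing form to find $\mu_s$ with $(\beta,\mu_s)\neq 0$; then whichever of $\beta\pm\mu_s$ is a root yields $\gamma$ after a short sign analysis. You instead look at the full set $R=-\langle-\ga\rangle$, use the reflection $s_\beta$ (which preserves $R$ since $n_\ga^\beta=0$) to upgrade ``$\langle\delta,\beta^\vee\rangle\le 0$ for all $\delta\in R$'' to ``$=0$ for all $\delta\in R$'', and then derive a contradiction from the fact that $R$ spans $\fh^*$ (via paths in the Dynkin diagram). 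Your reflection trick buys you the sign $\langle\delta,\beta^\vee\rangle\ge 1$ directly, so you avoid the two-case endgame of the paper; on the other hand, the paper's chain argument is slightly more constructive, producing a concrete candidate $\mu_s$ rather than an existence proof by contradiction. Both arguments ultimately rest on the same structural input---connectedness of the Dynkin diagram forces $R$ (or a chain inside it) to span, so $\beta$ cannot be orthogonal to all of it.
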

\proof Written as a $\mathbb{Z}$-linear combination of the simple roots $\ga_1,\ldots,\ga_\ell\in B$, the highest root $\theta$ is of the form $\theta=\sum_{i=1}^\ell n_i\ga_i,$ where $n_i\geq 1$ for all $i$.  Since $\fg$ is simple and the Chevalley generators $x_{\ga_i},x_{-\ga_i}$ generate $\fg$, there is a nonzero expression $\hbox{ad}\,z_r\cdots\hbox{ad}\,z_1(x_\alpha)\in\fg_\theta$ for some collection of $z_j\in\{x_{\ga_i},x_{-\ga_i}\ |\ 1\leq i\leq \ell\}$.  Using the Serre relations, it is easy to see that we can choose the $z_j$ to be positive root vectors: $z_j\in\{x_{\ga_i}\ |\ 1\leq i\leq \ell\}$ for all $j$.  That is, there is a sequence
\begin{equation}\label{*}
\ga=\mu_0\prec\mu_1\prec\cdots\prec\mu_r=\theta\hbox{\ (partial order taken with respect to\ }B\hbox{)}, 
\end{equation}
where $\mu_i-\mu_{i-1}\in B$ and $\mu_i\in\Phi$ for all $i\geq 0$, and $\mu_{-1}$ is defined to be zero.

The bilinear form $(-,-):\ \fh^*\times\fh^*\rightarrow k$ induced by Killing form is nondegenerate, and every simple root in $B$ appears as $\mu_i-\mu_{i-1}$ for some $i\geq 0$.  Therefore, $(\beta,\mu_s)$ is nonzero for some $0\leq s\leq r$.  Hence $\beta+\mu_s$ or $\beta-\mu_s$ is a root.

For each element $\tau$ of the root lattice, we write $\tau$ (uniquely) as a $\mathbb{Z}$-linear combination $\tau=\sum_{\gamma\in B} n_\gamma^\tau\gamma$ of the simple roots in $B$.  By the argument used to justify (\ref{*}), it is straightforward to verify that
$$\langle-\ga\rangle=\{-\ga-\mu\in\Phi\ |\ \mu\in\Phi^+\cup\{0\}\}=\{\tau\in\Phi\ |\ n_\ga^\tau<0\}.$$
Thus $n_\ga^\beta=0$, $n_\ga^{\beta+\mu_s}>0$, and $n_\ga^{\beta-\mu_s}<0$.  For any root $\tau$, all the nonzero coefficients $n_\gamma^\tau$ must have the same sign, so we see that if $\beta+\mu_s$ is a root, then $\beta+\mu_s\in\Phi^+$.  Similarly, if $\beta-\mu_s$ is a root, then $\beta-\mu_s\in\Phi^-$.  

If $\beta+\mu_s\in\Phi^+$, then $-\beta-\mu_s\in\Phi$ and $n_\ga^{-\beta-\mu_s}<0$, so $-\beta-\mu_s\in\langle-\ga\rangle$.  Then $\beta+(-\beta-\mu_s)=-\mu_s\in\langle -\ga\rangle$.  By the same argument, if $\beta-\mu_s\in\Phi^-$, then $\beta-\mu_s$ and $-\mu_s$ are both elements of $\langle -\ga\rangle$.\qed

\bigskip

The weight spaces of an $\cL$-module are precisely the weight spaces of its restriction to the Lie subalgebra $\fg\ot 1\subseteq \cL$.  An $\cL$-module is thus admissible if and only if it is admissible as a $\fg$-module.  Restricted to $\fg\ot 1$, evaluation modules are tensor products of $\fg$-modules, so the admissibility of evaluation representations can be settled using the following key proposition.

\begin{proposition}\label{admissible-for-g} Suppose that $W_1$ and $W_2$ are simple infinite-dimensional weight modules for $\fg$.  Then $W_1\ot W_2$ is {\em not} an admissible $\fg$-module.
\end{proposition}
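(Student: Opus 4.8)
\textit{Plan.} The plan is to reduce the statement to Lemma \ref{lemma1}. First I would dispose of a trivial case: if one of $W_1,W_2$ is itself not admissible, then tensoring it against a fixed nonzero weight vector of the other factor already produces arbitrarily large weight spaces in $W_1\ot W_2$, so $W_1\ot W_2$ is not admissible; hence we may assume both $W_i$ are admissible. Write $T_i$ (resp.\ $N_i$) for the set of roots acting injectively (resp.\ locally nilpotently) on $W_i$, with the refinements $T_{i,s},T_{i,a},N_{i,s},N_{i,a}$ of \eqref{TandNsets}. Lemma \ref{lemma1} then finishes the proof the moment we know $T_1\cap(\pm T_2)\neq\emptyset$. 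Note also that, being infinite dimensional, $W_i$ is not integrable, so $N_i\neq\Phi$ and $T_i\neq\emptyset$. Thus the real task is: for any two infinite-dimensional simple weight $\fg$-modules, $T_1\cap(\pm T_2)\neq\emptyset$.

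Suppose, toward a contradiction, that $T_1\cap(\pm T_2)=\emptyset$. Using $N_{i,a}=-T_{i,a}$ and the partition $\Phi=T_{i,s}\sqcup T_{i,a}\sqcup N_{i,s}\sqcup N_{i,a}$, one has $\pm T_2=T_{2,s}\sqcup T_{2,a}\sqcup N_{2,a}$, so $\Phi\setminus(\pm T_2)=N_{2,s}$ and therefore $T_1\subseteq N_{2,s}$; symmetrically $T_2\subseteq N_{1,s}$. The cuspidal cases are immediate: if, say, $T_1=\Phi$, then $\Phi=T_1\subseteq N_{2,s}\subseteq N_2$ makes $W_2$ integrable, hence finite dimensional, a contradiction. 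So neither module is cuspidal. I would then fix a base $B_1$ of $\Phi$ as in Proposition \ref{BBL-prop}(2) for $W_1$; by the structure in Proposition \ref{BBL-prop}(3) (cf.\ \cite{BrLe,BBL}) the set $N_{1,a}$ is the set of roots of the nilradical of a proper parabolic subalgebra of $\fg$, hence contains a simple root $\alpha\in B_1$. Since $-\alpha\in T_{1,a}$ and $T_{1,a}$ is an ideal of $\Phi_{B_1}^-$, the ideal $\langle-\alpha\rangle$ of $\Phi_{B_1}^-$ generated by $-\alpha$ satisfies $\langle-\alpha\rangle\subseteq T_{1,a}\subseteq T_1$.

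Now pick any $\beta\in T_2$. Since $\beta\in T_2\subseteq N_{1,s}$ whereas $\pm\langle-\alpha\rangle\subseteq T_{1,a}\sqcup N_{1,a}$ is disjoint from $N_{1,s}$, we have $\beta\notin\pm\langle-\alpha\rangle$, so Lemma \ref{lemma-2} produces $\gamma$ with $\gamma,\ \beta+\gamma\in\langle-\alpha\rangle\subseteq T_1\subseteq N_{2,s}$. As $(\beta+\gamma)+(-\gamma)=\beta\in\Phi$, we get $\fg_\beta=[\fg_{\beta+\gamma},\fg_{-\gamma}]$; but $\beta+\gamma\in N_{2,s}$ and $-\gamma\in N_{2,s}$ (the latter by symmetry of $N_{2,s}$), so both $\fg_{\beta+\gamma}$ and $\fg_{-\gamma}$ — and hence $\fg_\beta$ — lie in the reductive subalgebra $\mathfrak m=\fh\oplus\bigoplus_{\delta\in N_{2,s}}\fg_\delta$ attached to the root subsystem $N_{2,s}$ of Proposition \ref{BBL-prop}(1). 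This forces $\beta\in N_{2,s}\subseteq N_2$, contradicting $\beta\in T_2$. Therefore $T_1\cap(\pm T_2)\neq\emptyset$, and the proposition follows from Lemma \ref{lemma1}.

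The step I expect to be the main obstacle is the bookkeeping hidden in the last two paragraphs: one must extract from Proposition \ref{BBL-prop} and Lemmas \ref{lemma0}, \ref{lemma-2} the precise facts that $N_{1,a}$ contains a simple root of a suitable base, and that $N_{2,s}$ is additively closed — so that $\mathfrak m$ is genuinely a subalgebra with root system exactly $N_{2,s}$ and $\fg_\beta\subseteq\mathfrak m$ really does force $\beta\in N_{2,s}$. This is exactly where the combinatorial structure theory of \cite{fernando,BrLe,BBL} must be leaned on hardest.
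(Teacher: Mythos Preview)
Your argument is correct and follows essentially the same route as the paper: both reduce via Lemma~\ref{lemma1} to the case $T_1\cap(\pm T_2)=\emptyset$, use Lemma~\ref{lemma0} together with the irreducibility of $\Phi$ to locate a simple root $\alpha\in N_{1,a}$ for a suitable base, and then apply Lemma~\ref{lemma-2} to produce $\gamma,\beta+\gamma\in\langle-\alpha\rangle\subseteq T_1$ for some $\beta\in T_2$. The only divergence is the endgame: the paper feeds these three roots into a direct weight-multiplicity estimate (a two-variable version of the computation in Lemma~\ref{lemma1}), whereas you observe that the standing hypothesis gives $\gamma,\beta+\gamma\in T_1\subseteq N_{2,s}$ and use closedness of $N_{2,s}$ (implicit in the paper's use of ``root subsystem''; cf.\ the proof of Lemma~\ref{lemma0}) to force $\beta\in N_{2,s}$, a contradiction. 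This is a slightly tidier finish, at the cost of needing the preliminary reduction to admissible $W_i$ so that Lemma~\ref{lemma1} applies as stated.

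One imprecision worth flagging: the assertion that ``$N_{1,a}$ is the set of roots of the nilradical of a proper parabolic'' is stronger than what Proposition~\ref{BBL-prop}(3) provides, since an arbitrary ideal of $\Phi_B^+$ need not be generated by simple roots. What you actually need is only $B_1\cap N_{1,a}\neq\emptyset$, and this is exactly what the paper proves: if $B_1\subseteq T_{1,s}\cup N_{1,s}$ then Lemma~\ref{lemma0} forces $\Phi=T_{1,s}\cup N_{1,s}$ with the two pieces orthogonal, so irreducibility makes one of them all of $\Phi$, contradicting your non-cuspidal and infinite-dimensional hypotheses. You correctly anticipate that this is where Lemma~\ref{lemma0} enters.
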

\proof Let $T_i$, $N_i$, $T_{i,s}$, $T_{i,a}$, $N_{i,s}$, and $N_{i,a}$ be the sets (\ref{TandNsets}) associated with the $\fg$-module $W_i$ for $i=1,2$.  Let $B$ be a base of $\Phi$ chosen so that $N_{1,a}\subseteq \Phi_B^+$.  Since $W_1$ is infinite dimensional, $N_{1,s}\neq\Phi$.  Thus $\Phi_B^+=T_{1,s}^+\cup N_{1,s}^+\cup N_{1,a}$.  In particular, $B\subseteq T_{1,s}\cup N_{1,s}\cup N_{1,a}$.

By Proposition \ref{BBL-prop}(3), $B\cap T_{1,s}$ is a base for $T_{1,s}$.  If $B\subseteq T_{1,s}$, then $T_{1,s}=\Phi$.  But then $T_1\cap T_2=\Phi\cap T_2=T_2$.  Since $W_2$ is infinite dimensional and simple, $T_2\neq\emptyset$, so by Lemma \ref{lemma1}, $W_1\ot W_2$ is not admissible.

Without loss of generality, we may therefore assume that $B\not\subseteq T_{1,s}$.  If $B\subseteq T_{1,s}\cup N_{1,s}$, then it follows from Lemma \ref{lemma0} that $\Phi=T_{1,s}\cup N_{1,s}$, since $\ga+\beta$ is never a root if $\ga\in T_{1,s}$ and $\beta\in N_{1,s}$.  In particular, $(\ga,\beta)=0$ for all $\ga\in T_{1,s}$ and $\beta\in N_{1,s}$, so $\Phi$ is decomposable, a contradiction.  Hence $B\not\subseteq T_{1,s}\cup N_{1,s}$, so there is a simple root $\ga$ in $N_{1,a}$.

Since $N_{1,a}$ is an ideal of $\Phi_B^+$, we see that the ideal generated by $\ga$ in $\Phi_B^+$ is contained in $N_{1,a}$.  Since $T_{1,a}=-N_{1,a}$, we have $\langle -\ga\rangle\subseteq T_{1,a}$, in the notation of Lemma \ref{lemma-2}.

By Lemma \ref{lemma1}, we may assume that $T_2\cap(\pm T_1)=\emptyset$.  Moreover, since $W_2$ is infinite dimensional, $T_2$ is nonempty.  Let $\beta\in T_2$.  Clearly, $\beta\notin\pm\langle-\ga\rangle$.  From Lemma \ref{lemma-2}, there exists $\gamma\in\langle -\ga\rangle$ such that $\gamma+\beta\in\langle -\ga\rangle$.

Choose nonzero weight spaces $W_{1,\lambda}\subseteq W_1$ and $W_{2,\mu}\subseteq W_2$.  Then for any nonnegative integer $n$, the $(\lambda+\mu+n(\gamma+\beta))$-weight space of $W_1\ot W_2$ contains the sum
$$\sum_{\ell=0}^n W_{1,\lambda+\ell\gamma+(n-\ell)(\gamma+\beta)}\ot W_{2,\mu+\ell\beta}.$$
Since $\gamma,\gamma+\beta\in T_1$ and $\beta\in T_2$, we see that
$$\dim(W_1\ot W_2)_{\lambda+\mu+n(\gamma+\beta)}>n,$$
so $W_1\ot W_2$ is not admissible.\qed

\bigskip

The main result of this section, the following criterion for admissibility of evaluation weight modules, is now an easy corollary of Proposition \ref{admissible-for-g}.

\begin{theorem}\label{3.14}  Let $V(\underline{M},\underline{W})$ be a simple evaluation weight module for $\fg\ot S$, where $\underline{W}=(W_1,\ldots,W_\ell)$.  Then $V(\underline{M},\underline{W})$ is admissible if and only if each of the $W_i$ is admissible and at most one of the $W_i$ is infinite dimensional.
\end{theorem}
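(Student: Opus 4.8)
The plan is to reduce the statement to the restriction of $V(\underline{M},\underline{W})$ to $\fg\ot 1$. As observed just before the theorem, this restriction is the tensor product of $\fg$-modules $W:=W_1\ot\cdots\ot W_\ell$, and the $\cL$-module $V(\underline{M},\underline{W})$ is admissible if and only if $W$ is admissible as a $\fg$-module. So I would work entirely with $W$ and its weight-space decomposition $W_\nu=\bigoplus_{\lambda_1+\cdots+\lambda_\ell=\nu}W_{1,\lambda_1}\ot\cdots\ot W_{\ell,\lambda_\ell}$, and note that reordering the factors does not affect admissibility since admissibility depends only on the restriction to $\fg\ot 1$.

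For the ``if'' direction I would assume each $W_i$ is admissible with all weight multiplicities at most $B$, and, after reordering, that $W_2,\ldots,W_\ell$ are finite dimensional. Then $W':=W_2\ot\cdots\ot W_\ell$ is a finite-dimensional $\fg$-module of some dimension $D$, and for each $\nu$ one has $W_\nu=\bigoplus_\eta W_{1,\nu-\eta}\ot W'_\eta$ with $\eta$ ranging over the finitely many weights of $W'$; hence $\dim W_\nu\le\sum_\eta\dim W_{1,\nu-\eta}\cdot\dim W'_\eta\le B\sum_\eta\dim W'_\eta=BD$, so $W$ is admissible.

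For the ``only if'' direction I would assume $W$ is admissible with weight multiplicities bounded by $B$. First, fixing an index $i$ and choosing a nonzero vector $v_j$ in some weight space of each $W_j$ for $j\ne i$, the subspace $W_{i,\mu}\ot\bigotimes_{j\ne i}kv_j$ embeds in a single weight space of $W$ for every weight $\mu$ of $W_i$; this forces $\dim W_{i,\mu}\le B$, so each $W_i$ is admissible. Next, if two factors $W_i$ and $W_j$ (with $i\ne j$) were both infinite dimensional, then being simple infinite-dimensional weight modules, Proposition \ref{admissible-for-g} would give that $W_i\ot W_j$ has weight spaces of arbitrarily large (or even infinite) dimension; choosing nonzero vectors in weight spaces of all the remaining factors yields an embedding $W_i\ot W_j\hookrightarrow W$ that shifts all weights by a single fixed element, contradicting the admissibility of $W$. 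Hence at most one $W_i$ is infinite dimensional.

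The real content here is entirely in Proposition \ref{admissible-for-g}, which I take as given; the remaining steps are bookkeeping, and the only places needing a little care are the uniform bound $BD$ in the ``if'' direction and the weight-shift embeddings used in the ``only if'' direction to transfer (non)admissibility between $W$ and its tensor factors.
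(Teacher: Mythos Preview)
Your proposal is correct and follows essentially the same route as the paper's proof: the paper reduces to the restriction to $\fg\ot 1$, declares the ``if'' direction clear, and says the converse follows directly from Proposition~\ref{admissible-for-g}. You have simply spelled out the details the paper leaves implicit---the explicit bound $BD$ for the ``if'' direction, and the weight-shifted embeddings $W_{i,\mu}\hookrightarrow W$ and $W_i\ot W_j\hookrightarrow W$ that transfer (non)admissibility in the ``only if'' direction---so there is no substantive difference in approach.
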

\proof If each of the $W_i$ is admissible and no more than one of them is infinite dimensional, then it is clear that $V(\underline{M},\underline{W})$ is admissible.  The converse follows directly from Proposition \ref{admissible-for-g}.\qed

\section{Simple admissible modules}\label{s3}
While the main goal of this section is to describe the simple admissible modules of $\cL$ (and consequently, of the universal central extension of $\cL$), we impose the admissibility condition only after some general results which hold for arbitrary simple weight modules.

Let $V$ be a simple weight module for $\cL$, with action given by $\phi:\ \cL\rightarrow \hbox{End}\,V$.  Since $\fg$ is finite dimensional and simple, it is easy to verify that the ideals of $\cL$ are all of the form $\fg\ot P$ for ideals $P$ of the commutative (but not necessarily reduced) $k$-algebra $S$.  In particular, $\ker\phi=\fg\ot I$ for some ideal $I=I(V)$ of $S$.  Our goal is to show that $I$ is cofinite in $S$. 

For each $\ga\in\Phi\cup\{0\}$, $v\in V$, and $\lambda\in\fh^*$, let
\begin{eqnarray*}
J_\ga(v)&=&\{s\in S\ |\ (x\ot s)v=0\hbox{\ for all\ }x\in\fg_\ga\},\\
J(v)&=&\bigcap_{\beta\in\Phi\cup\{0\}}J_\beta(v),\\
J(\lambda)&=&\bigcap_{w\in V_\lambda}J(w),\\
J(\lambda,\Phi)&=&\bigcap_{\mu\in\lambda+(\Phi\cup\{0\})}J(\mu).
\end{eqnarray*}
\begin{lemma}
The vector space $J(\lambda,\Phi)$ is of finite codimension in $S$ for each $\lambda\in\fh^*$.
\end{lemma}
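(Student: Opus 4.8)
The plan is to exploit the fact that, by its very definition, $J(\lambda,\Phi)$ is a \emph{finite} intersection of subspaces of the type $J_\beta(w)$, and that each such subspace has finite codimension because membership in it amounts to a finite system of linear equations on $S$.

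First I would unravel the definitions. Since $\Phi\cup\{0\}$ is finite, the set $\lambda+(\Phi\cup\{0\})$ is finite; and for each $\mu$ in this set the weight space $V_\mu$ is finite dimensional. Fixing a basis $w_1^{(\mu)},\ldots,w_{m_\mu}^{(\mu)}$ of $V_\mu$ (the empty basis if $V_\mu=0$), one checks at once that $\bigcap_{w\in V_\mu}J_\beta(w)=\bigcap_{i=1}^{m_\mu}J_\beta(w_i^{(\mu)})$ for every $\beta\in\Phi\cup\{0\}$, because an element $s\in S$ with $(x\ot s)w_i^{(\mu)}=0$ for all $i$ and all $x\in\fg_\beta$ automatically annihilates every $w\in V_\mu$ via $\fg_\beta$. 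Hence
$$J(\lambda,\Phi)=\bigcap_{\mu\in\lambda+(\Phi\cup\{0\})}\ \bigcap_{\beta\in\Phi\cup\{0\}}\ \bigcap_{i=1}^{m_\mu}J_\beta\big(w_i^{(\mu)}\big)$$
is a finite intersection of subspaces of $S$, so it suffices to show that each individual $J_\beta(w)$, with $w$ a weight vector of weight $\mu$, has finite codimension in $S$.

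To see this, take first $\beta\in\Phi$, so $\fg_\beta=kx_\beta$. For $h\in\fh$ and $s\in S$ one computes $(h\ot 1)(x_\beta\ot s)w=\big(\mu(h)+\beta(h)\big)(x_\beta\ot s)w$, so the $k$-linear map $S\to V_{\mu+\beta}$, $s\mapsto(x_\beta\ot s)w$, is well defined with kernel exactly $J_\beta(w)$; since $V_{\mu+\beta}$ is finite dimensional, $\dim_k\!\big(S/J_\beta(w)\big)\le\dim_k V_{\mu+\beta}<\infty$. For $\beta=0$ the same argument applied to each element of a basis $h_1,\ldots,h_n$ of $\fg_0=\fh$ gives $J_0(w)=\bigcap_{j=1}^n\ker\!\big(S\to V_\mu,\ s\mapsto(h_j\ot s)w\big)$, again a finite intersection of kernels of maps into a finite-dimensional space. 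Since a finite intersection of finite-codimensional subspaces of a vector space is again of finite codimension, the displayed formula yields that $J(\lambda,\Phi)$ has finite codimension in $S$. I do not foresee a real obstacle here: the one point that makes the argument work is that, for a weight vector $w$, all the vectors $(x\ot s)w$ lie in a single finite-dimensional weight space, which collapses the annihilation conditions to finitely many linear equations; everything else is bookkeeping with finite intersections.
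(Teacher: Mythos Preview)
Your proof is correct and follows essentially the same approach as the paper: both express $J(\lambda,\Phi)$ as a finite intersection of spaces $J_\beta(w)$ and show each has finite codimension via the linear map $s\mapsto(x\ot s)w$ into a finite-dimensional weight space. The only cosmetic difference is that the paper handles the cases $\beta\in\Phi$ and $\beta=0$ uniformly by fixing a basis $\mathcal{B}_\beta$ of each $\fg_\beta$, whereas you treat them separately.
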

\begin{proof}
Since $V_\lambda$ is finite dimensional, $J(\lambda)$ can be expressed as a finite intersection $\cap_i J(v_i)$, where the $v_i$ form a $k$-basis of $V_\lambda$.  The space $J(\lambda,\Phi)$ is thus the intersection of {\em finitely many} spaces of the form $J_\ga(v)$.

Let $x\in\fg_\ga$ and $v\in V_\lambda$.  There is then a linear map 
\begin{eqnarray*}
\eta_{x,v}:\ S&\longrightarrow&V_{\lambda+\ga}\\
s&\longmapsto&(x\ot s)v,
\end{eqnarray*}
which obviously descends to an injection of $S/\ker\eta_{x,v}$ into the finite-dimensional space $V_{\lambda+\ga}$, so $S/\ker\eta_{x,v}$ is finite dimensional.  Fixing a basis $\mathcal{B}_\ga$ of $\fg_\ga$ for $\ga\in\Phi\cup\{0\}$, we see that $J_\ga(v)=\cap_{x\in\mathcal{B}_\ga}\ker\eta_{x,v}$ for all $v\in V_\lambda$, so $J(\lambda,\Phi)$ can be expressed as an intersection taken over some finite collection of homogeneous elements $x\in \fg$ and $v\in V$.  In particular,
\begin{eqnarray*}
S/J(\lambda,\Phi)&\longrightarrow&\bigoplus_{x,v}S/\ker\eta_{x,v}\\
s+J(\lambda,\Phi)&\longmapsto&(s+\ker\eta_{x,v})_{x,v}
\end{eqnarray*} 
is a linear injection.  Since each $S/\ker\eta_{x,v}$ is finite dimensional, we see that $J(\lambda,\Phi)$ is of finite codimension in $S$.\qed
\end{proof}

\bigskip

\begin{lemma}\label{lemma2}
$J(\lambda,\Phi)S\subseteq J(\lambda)$ for each weight $\lambda$.
\end{lemma}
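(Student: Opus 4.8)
The plan is to reduce everything to a single commutator identity that separates the two ``function'' factors of a product $st$. Fix a weight $\lambda$, an element $s\in J(\lambda,\Phi)$, an arbitrary $t\in S$, and a weight vector $w\in V_\lambda$; the goal is to show $st\in J(w)$, i.e. $(x\otimes st)w=0$ for every $\ga\in\Phi\cup\{0\}$ and every $x\in\fg_\ga$. Since $\fg$ is simple we have $[\fg,\fg]=\fg$, and comparing $\fh$-weight components yields $\fg_\ga=\sum_{\delta}[\fg_\delta,\fg_{\ga-\delta}]$, the sum running over those $\delta$ with $\delta,\ga-\delta\in\Phi\cup\{0\}$. (For $\ga=0$ this is just the statement that $\fh$ is spanned by the elements $h_\beta=[x_\beta,x_{-\beta}]$, $\beta\in\Delta$.) So by linearity it suffices to treat the case $x=[y,z]$ with $y\in\fg_\delta$ and $z\in\fg_{\ga-\delta}$.

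For such an $x$, I would use the identity $x\otimes st=[y\otimes s,z\otimes t]$ in $\cL$ to write
\[
(x\otimes st)w=(y\otimes s)\big((z\otimes t)w\big)-(z\otimes t)\big((y\otimes s)w\big),
\]
and then kill each term using $s\in J(\lambda,\Phi)$. For the second term, $J(\lambda,\Phi)\subseteq J(\lambda)\subseteq J(w)$ gives $(y\otimes s)w=0$, so the term vanishes. For the first term, $(z\otimes t)w$ lies in $V_{\lambda+(\ga-\delta)}$ with $\ga-\delta\in\Phi\cup\{0\}$, so $J(\lambda,\Phi)\subseteq J(\lambda+(\ga-\delta))\subseteq J\big((z\otimes t)w\big)$, which forces $(y\otimes s)\big((z\otimes t)w\big)=0$. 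Hence $(x\otimes st)w=0$, and summing over the spanning commutators, then over $x\in\fg_\ga$ and $\ga\in\Phi\cup\{0\}$, and finally over $w\in V_\lambda$, proves $J(\lambda,\Phi)S\subseteq J(\lambda)$.

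I do not expect a genuine obstacle here: the whole content is spotting the commutator identity and observing that $J(\lambda,\Phi)$ was defined precisely so as to annihilate $V_\lambda$ together with every neighbouring weight space $V_{\lambda+\mu}$, $\mu\in\Phi\cup\{0\}$, which is exactly what the two terms above require. The only point needing a moment's attention is the bookkeeping: checking that each intermediate vector really does lie in a weight space of the form $V_{\lambda+\mu}$ with $\mu\in\Phi\cup\{0\}$, and that the grading argument for $\fg_\ga=\sum_\delta[\fg_\delta,\fg_{\ga-\delta}]$ also covers the Cartan case $\ga=0$.
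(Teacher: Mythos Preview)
Your argument is correct and is essentially the paper's own proof: both use the commutator identity $[y,z]\otimes st=[y\otimes s,z\otimes t]$, expand, and kill each term via $s\in J(\lambda,\Phi)$, appealing to $\fg=[\fg,\fg]$ to span all of $\fg$. The only cosmetic difference is that the paper runs the argument over Chevalley basis elements rather than explicitly decomposing $\fg_\alpha$ as $\sum_\delta[\fg_\delta,\fg_{\alpha-\delta}]$.
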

\begin{proof}
Let $r\in J(\lambda,\Phi)$, $s\in S$, and $v\in V_\lambda$.  If $x,y$ are elements of our Chevalley basis $\mathcal{B}$, then 
\begin{align*}
\big([x,y]\ot rs\big)v&=[x\ot r,y\ot s]v\\
&=(x\ot r)(y\ot s)v-(y\ot s)(x\ot r)v.
\end{align*}
Since $y\in\fg_\ga$ for some $\ga\in\Phi\cup\{0\}$ and $v\in V_\lambda$, we see that $(y\ot s)v\in V_{\lambda+\ga}$, so $x\ot r$ annihilates both $(y\ot s)v$ and $v$, by definition.  The Lie algebra $\fg$ is simple, thus perfect, so
$$(\fg\ot rs)v=\big([\fg,\fg]\ot rs\big)v=0,$$
and $rs\in J(\lambda)$.\qed
\end{proof}

\bigskip

\begin{proposition}\label{kernel-is-cofinite}
Let $\lambda$ be a weight of a simple weight $\cL$-module $V$.  Then $I=J(\lambda,\Phi)$, where $I=I(V)$.  In particular, the ring $S/I$ is a finite-dimensional $k$-algebra.
\end{proposition}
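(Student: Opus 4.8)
The plan is to prove the two inclusions $I\subseteq J(\lambda,\Phi)$ and $J(\lambda,\Phi)\subseteq I$; once this is done, the finite dimensionality of $S/I$ is immediate from the first lemma of this section, which asserts that $J(\lambda,\Phi)$ has finite codimension in $S$.

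The inclusion $I\subseteq J(\lambda,\Phi)$ is a formality. If $s\in I$, then $\fg\ot s\subseteq\ker\phi$, so $(x\ot s)w=0$ for every $x\in\fg$ and every $w\in V$; hence $s$ lies in every $J_\beta(w)$, so in $J(w)$ for every weight vector $w$, so in $J(\mu)$ for every weight $\mu$, and intersecting over $\mu\in\lambda+(\Phi\cup\{0\})$ gives $s\in J(\lambda,\Phi)$.

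For the reverse inclusion the idea is to promote the vanishing of the single weight space $V_\lambda$ to the vanishing of all of $V$ by simplicity. I would let $\mathcal{I}$ denote the ideal of $S$ generated by $J(\lambda,\Phi)$. Since $S$ is unital, $J(\lambda,\Phi)\subseteq\mathcal{I}$; and since $J(\lambda)$ is a $k$-subspace of $S$, Lemma \ref{lemma2} yields $\mathcal{I}=J(\lambda,\Phi)\,S\subseteq J(\lambda)$, i.e. $\fg\ot\mathcal{I}$ annihilates $V_\lambda$. Now consider
$$U=\{v\in V\ |\ (\fg\ot\mathcal{I})v=0\}.$$
This is an $\cL$-submodule: for $v\in U$, $x\ot s\in\cL$, $y\in\fg$, and $t\in\mathcal{I}$, the bracket relation gives $(y\ot t)(x\ot s)v=(x\ot s)(y\ot t)v+([y,x]\ot ts)v=([y,x]\ot ts)v$, the first term vanishing because $v\in U$ and $t\in\mathcal{I}$, the last because $ts\in\mathcal{I}$ ($\mathcal{I}$ being an ideal) and $v\in U$. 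Since $\lambda$ is a weight, $V_\lambda$ is a nonzero subspace of $U$, so $U$ is a nonzero submodule of the simple module $V$, whence $U=V$. Therefore $\fg\ot\mathcal{I}\subseteq\ker\phi=\fg\ot I$, so $\mathcal{I}\subseteq I$, and in particular $J(\lambda,\Phi)\subseteq\mathcal{I}\subseteq I$.

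Combining the two inclusions yields $I=J(\lambda,\Phi)$ (and, as a byproduct, $\mathcal{I}=I$), and the finite codimension of $J(\lambda,\Phi)$ shows that $S/I$ is a finite-dimensional $k$-algebra. The only step calling for any thought is the passage from $J(\lambda,\Phi)$ to the ideal $\mathcal{I}$ it generates — this is exactly what makes $U$ closed under the $\cL$-action, and it is where Lemma \ref{lemma2} is used — but there is no serious obstacle, since the genuine content (finite codimension of $J(\lambda,\Phi)$, and $J(\lambda,\Phi)S\subseteq J(\lambda)$) has already been isolated in the two preceding lemmas.
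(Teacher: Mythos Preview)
Your proof is correct and follows essentially the same approach as the paper. The paper proves $J(\lambda,\Phi)S\subseteq I$ by an explicit induction on the length $\ell$ of monomials $(x_1\ot s_1)\cdots(x_\ell\ot s_\ell)v$ (with $v\in V_\lambda$) to show that $\fg\ot J(\lambda,\Phi)S$ annihilates all of $V$; your submodule argument $U=\{v\in V\mid (\fg\ot\mathcal{I})v=0\}$ is simply a cleaner repackaging of that induction, using the same two ingredients (Lemma~\ref{lemma2} and simplicity of $V$).
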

\begin{proof}
Let $v$ be a nonzero element of $V_\lambda$ and let $w$ be an arbitrary element of $V$.  Since $V$ is an irreducible representation, $w$ can be expressed as a linear combination of elements of the form $(x_1\ot s_1)\cdots(x_\ell\ot s_\ell)v,$ where $\ell\geq 0$, $s_1,\ldots,s_\ell\in S$, and $x_1,\ldots x_\ell\in\fg$.  

We use induction on $\ell$ to show that the expression
$$(x\ot rs)(x_1\ot s_1)\cdots(x_\ell\ot s_\ell)v$$
is zero for all $x\in \fg$, $r\in J(\lambda,\Phi)$, and $s\in S$.  If $\ell=0$, then by Lemma \ref{lemma2}, $(x\ot rs)v=0$ for all $x\ot rs\in\fg\ot J(\lambda,\Phi)S$.  Otherwise, $\ell\geq 1$ and 
\begin{align*}
(x\ot rs)(x_1\ot s_1)\cdots (x_\ell\ot s_\ell)v=(x_1\ot s_1)&(x\ot rs)(x_2\ot s_2)\cdots (x_\ell\ot s_\ell)v\\
&+\big([x,x_1]\ot rss_1\big)(x_2\ot s_2)\cdots(x_\ell\ot s_\ell)v.
\end{align*}
By induction hypothesis, 
$$(x\ot rs)(x_2\ot s_2)\cdots (x_\ell\ot s_\ell)v=0$$ and $$
\big([x,x_1]\ot rss_1\big)(x_2\ot s_2)\cdots(x_\ell\ot s_\ell)v=0,$$ so $rs\in I$ for all $r\in J(\lambda,\Phi)$ and $s\in S$.  Taking $s=1$, we see that $J(\lambda,\Phi)\subseteq I$.  Since $I$ is obviously contained in $J(\lambda,\Phi)$, we now see that $I=J(\lambda,\Phi)$, and $S/I$ is finite dimensional.\qed
\end{proof}

\bigskip

The following proposition, together with Proposition \ref{kernel-is-cofinite}, is the crucial ingredient to showing that simple admissible modules are always evaluation representations.  We will now add (and use) the hypothesis that $V$ is a simple {\em admissible} (weight) module of $\cL$.

\begin{proposition}\label{radical-ideal}
Let $V$ be a simple admissible module of $\cL$.  Then $I=I(V)$ is a radical ideal of $S$.
\end{proposition}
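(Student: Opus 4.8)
The plan is to argue by contradiction: suppose $I$ is not radical, so there is a nonzero ideal $N\subseteq S/I$ with $N^2=0$, where $\bar S=S/I$ is the finite-dimensional algebra on which $\fg\ot\bar S$ acts faithfully on $V$ (using Proposition \ref{kernel-is-cofinite}).  I want to produce a nonzero vector $w\in V$ annihilated by the ideal $\fg\ot N\subseteq\fg\ot\bar S$.  The subspace $V^N:=\{v\in V\ |\ (\fg\ot N)v=0\}$ is then a nonzero submodule of $V$ (because $\fg\ot N$ is an ideal of $\fg\ot\bar S$, so $V^N$ is stable under the action of $\cL$), hence $V^N=V$ by simplicity, contradicting the faithfulness of the action of $\fg\ot\bar S$ on $V$.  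So the whole problem reduces to producing one such $w$.

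To build $w$, I would work with a single $\mathfrak{sl}_2$-triple $\{x_\ga,x_{-\ga},h_\ga\}$ for a fixed root $\ga\in\Phi^+$ and exploit admissibility.  Pick a nonzero $n\in N$ and a nonzero weight vector $v\in V_\lambda$.  Consider the chain of vectors $(x_\ga\ot n)^k v$ lying in weight spaces $V_{\lambda+k\ga}$.  Since $n^2=0$, the elements $x_\ga\ot n$ and $x_{-\ga}\ot n$ both square to zero and in fact $[x_\ga\ot n,\ x_{-\ga}\ot n]=h_\ga\ot n^2=0$, so $x_\ga\ot n$, $x_{-\ga}\ot n$, and $h_\ga\ot 1$ generate a Lie subalgebra isomorphic to a Heisenberg-type algebra (the two ``nilpotent'' elements commute, and $h_\ga\ot 1$ scales them by $\pm2$).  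Now here is where admissibility enters: if $x_\ga\ot n$ did \emph{not} act locally nilpotently, then $\mathfrak{sl}_2$-type combinatorics (the standard computation that an injective-looking string forces the weight multiplicities to grow, exactly as in Lemma \ref{lemma1} and Proposition \ref{admissible-for-g}) would force some weight space to have unbounded dimension, contradicting admissibility.  Hence $x_\ga\ot n$ acts locally nilpotently; choosing $k$ maximal with $(x_\ga\ot n)^k v\neq 0$ gives a nonzero vector $w_\ga$ killed by $x_\ga\ot n$.

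The remaining work is to pass from ``killed by $x_\ga\ot n$ for one $\ga$'' to ``killed by $\fg\ot N$''.  I would do this in stages.  First, having killed $x_\ga\ot n$, replace $w_\ga$ by a suitable power of $x_{-\ga}\ot n$ applied to it (which also acts locally nilpotently, by the same admissibility argument) to get a vector killed by \emph{both} $x_{\pm\ga}\ot n$; since these together with $h_\ga\ot 1$ and the bracket $[x_\ga\ot n, x_{-\ga}\ot n']=h_\ga\ot nn'=0$ (still using $N^2=0$) already account for the $\mathfrak{sl}_2$-copy $\langle x_\ga,x_{-\ga},h_\ga\rangle\ot N$, one gets a vector annihilated by $(\langle x_\ga,x_{-\ga},h_\ga\rangle)\ot N$.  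Then iterate over the finitely many roots: at each step one has a nonzero vector killed by $(\fg'\ot N)$ for some subalgebra $\fg'$ generated by a union of $\mathfrak{sl}_2$-triples, and one enlarges $\fg'$ by absorbing one more root $\ga$, using local nilpotence of $x_{\pm\ga}\ot n$ (admissibility again) and the fact that brackets of elements of $\fg\ot N$ land in $[\fg,\fg]\ot N^2=0$, so that adjoining $x_{\pm\ga}\ot n$ does not disturb the annihilation already achieved.  After finitely many steps the vector is killed by all of $\fg\ot N$ (using that $\fg$ is generated by the $x_{\pm\ga}$ and is perfect), producing the desired $w$.

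The main obstacle I anticipate is the inductive bookkeeping in the last paragraph: one must be careful that enlarging the ``already-annihilated'' subalgebra by a new root vector $x_{\pm\ga}\ot n$ really does preserve the annihilation, and that the local-nilpotence-from-admissibility argument can be applied uniformly to the vector obtained at each stage (not just to the original $v$).  The commutativity coming from $N^2=0$ is exactly what makes this go through — it collapses all the potentially obstructive brackets $[\fg\ot N,\fg\ot N]$ to zero — so the delicate part is organizing the induction so that at each step the relevant nilpotent operators genuinely commute with each other and with what has already been killed, and invoking admissibility at the right moment to guarantee finiteness of the strings.
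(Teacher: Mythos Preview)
Your overall architecture---assume $I$ not radical, find $N\neq 0$ with $N^2=0$ inside $S/I$, produce a nonzero $w$ with $(\fg\ot N)w=0$, then conclude by simplicity and faithfulness---is exactly the paper's, and the observation that $[\fg\ot N,\fg\ot N]\subseteq\fg\ot N^2=0$ is the right structural fact.  However, two of your steps do not go through as written.

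\textbf{Local nilpotence of $x_\ga\ot n$.}  You claim that if $(x_\ga\ot n)^k v\neq 0$ for all $k$, then ``$\mathfrak{sl}_2$-type combinatorics as in Lemma~\ref{lemma1}'' forces unbounded weight multiplicities.  But the vectors $(x_\ga\ot n)^k v$ lie in \emph{distinct} weight spaces $V_{\lambda+k\ga}$; their nonvanishing only says those spaces are nonzero, not large.  Lemma~\ref{lemma1} works because two injective operators on two tensor factors produce many vectors in the \emph{same} weight space---there is no second factor here, and you have no reason to know that $x_\ga\ot 1$ acts injectively.  The paper's mechanism is different: one first chooses $v$ to be a common eigenvector for $\fh\ot N$ (possible since $\fh\ot N$ is abelian and preserves each weight space), and then, setting $x=h_\ga\ot s$, $y=x_\ga\ot 1$, $z=2x_\ga\ot s$, uses the Heisenberg relations $[x,y]=z$, $[x,z]=0$, $[y,z]=0$ to show that $\{z^{n-r}y^r v:0\le r\le n\}$ is linearly independent inside the single weight space $V_{\lambda+n\ga}$.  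That is what forces $z$ to be nilpotent on $v$, and the eigenvector hypothesis for $\fh\ot N$ is essential to the separation argument.

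\textbf{Killing $\fh\ot N$.}  You write that once $x_{\pm\ga}\ot N$ kill your vector, the relation $[x_\ga\ot n,\,x_{-\ga}\ot n']=h_\ga\ot nn'=0$ ``accounts for'' $h_\ga\ot N$.  It does not: that identity only says $h_\ga\ot N^2=0$ in $\cL$, whereas $h_\ga\ot N$ is not contained in the Lie subalgebra generated by $x_{\pm\ga}\ot N$ at all.  So there is no automatic reason a vector annihilated by $(\mathfrak{n}_-\oplus\mathfrak{n}_+)\ot N$ should be annihilated by $\fh\ot N$; a priori it is only a common eigenvector.  The paper handles this with a separate, nontrivial computation (its Step~5): using finiteness of the weight space of $w$, one extracts a dependence relation among the $(x_\ga\ot 1)^r(x_{-\ga}\ot 1)^r w$, then applies $(x_\ga\ot t)^\ell(x_{-\ga}\ot t)^\ell$ and the commutation formulas (valid because $(x_{\pm\ga}\ot t)w=0$) to conclude that the eigenvalue of $h_\ga\ot t$ on $w$ is zero.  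Your sketch contains no analogue of this step, and without it the argument does not close.
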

\begin{proof}
After a few preliminary observations, we will divide the proof into several steps.

Suppose $I$ is {\em not} a radical ideal, and let $A=S/I$.  Then $V$ is a faithful representation of $\fg\ot A$, and $A$ is finite-dimensional by Proposition \ref{kernel-is-cofinite}.  Since $A$ is artinian, its nilradical $\hbox{nilrad}\,A=\sqrt{I}/I$ is nilpotent.  There is thus some $m>1$ such that $(\hbox{nilrad}\,A)^{m-1}\neq 0$, but $(\hbox{nilrad}\,A)^m=0$.  Let $N=(\hbox{nilrad}\,A)^{\ulcorner m/2\urcorner}$, where $\ulcorner m/2\urcorner$ is the least positive integer greater than or equal to $m/2$.  Then $N\neq 0$, but $N^2=0$.

The finite-dimensional abelian subalgebra $\fh\ot N$ commutes with $\fh\ot 1$, so it preserves the (finite-dimensional) weight spaces of $V$.  In particular, $\fh\ot N$ has a nonzero common eigenvector in each weight space of $V$.  Fix such an eigenvector $v\in V$.  

\medskip

\noindent
{\bf \em Step 1}\quad \ {\em Let $\ga\in\Phi^+$ and $s\in N$, and set $x=h_\alpha\ot s$, $y=x_\ga\ot 1$, and $z=2x_\ga\ot s$.  Suppose $z^nv\neq 0$ for some $n\geq 0$.  Then the set $\{z^{n-r}y^rv\ |\ r=0,\ldots,n\}$ is linearly independent.}

\smallskip

\noindent
{\bf\em Proof}\quad  Since $[x,z]\in\fg\ot N^2=0$, we see that $z$ commutes with $x$ and $y$.  By induction, it is easy to verify that 
$$xy^r=y^rx+ry^{r-1}z,$$
in the universal enveloping algebra $U(\cL)$, for $r\geq 1$.  Suppose $R=\sum_{r=0}^nc_rz^{n-r}y^rv=0$ for some $c_r\in k$.  Then\begin{align*}
0=&xR\\
=&\sum_{r=0}^nc_rz^{n-r}(y^rx+ry^{r-1}z)v\\
=&\lambda R+\sum_{r=1}^nc_rrz^{n-r+1}y^{r-1}v,
\end{align*} 
where $\lambda$ is the eigenvalue by which $x$ acts on $v$.  Since $R=0$,
$$\sum_{r=1}^nrc_rz^{n-r+1}y^{r-1}v=0.$$
Applying $x$ again, we obtain
$$\sum_{r=2}^nr(r-1)c_rz^{n-r+2}y^{r-2}v=0,$$
and by induction,
$$\sum_{r=\ell}^nr(r-1)\cdots(r-(\ell-1))c_rz^{n-r+\ell}y^{r-\ell}v=0,$$
for $\ell=0,\ldots,n$.  Taking $\ell=n$, we obtain $n!c_nz^nv=0$, so since $z^nv\neq 0$, we have that $c_n=0$.  Taking $\ell=n-1$ then gives
\begin{align*}
0&=\sum_{r=n-1}^nr(r-1)\cdots(r-n+2)c_rz^{2n-r-1}y^{r-n+1}v\\
&=(n-1)!c_{n-1}z^nv,
\end{align*}
so $c_{n-1}=0$.  By induction on $\ell$, we see that $c_\ell=0$ for $\ell=n,n-1,\ldots,0,$ and the set $\{z^{n-r}y^rv\ |\ r=0,\ldots,n\}$ is linearly independent.

\medskip

\noindent
{\bf\em Step 2}\quad  {\em The element $z=2x_\ga\ot s$ acts nilpotently on $v$.}

\smallskip

\noindent
{\bf\em Proof}\quad  If $z^nv\neq 0$, for all $n$, then by Step 1, $\{z^{n-r}y^rv\ |\ r=0,\ldots,n\}$ is a linearly independent subset of $V_{\mu+n\alpha}$, where $\mu$ is the weight of $v$.  Hence the weight multiplicities of $V$ are unbounded, contradicting the assumption that $V$ is admissible.

\medskip

\noindent
{\bf\em Step 3}\quad  {\em There exists a nonzero weight vector $u\in U(\mathfrak{n}_+\ot N)v$ such that $(\mathfrak{n}_+\ot N)u=0$.}

\smallskip

\noindent
{\bf\em Proof}\quad  By Step 2, every element of the finite-dimensional subalgebra $\mathfrak{n}_+\ot N\subseteq\cL$ acts nilpotently on $v$.  Since $\mathfrak{n}_+\ot N$ is also abelian, it now follows that $U(\mathfrak{n}_+\ot N)v$ is a finite-dimensional module on which the  Lie algebra $\mathfrak{n}_+\ot N$ acts by nilpotent transformations.  By Engel's Theorem, there is a nonzero vector $u\in
U(\mathfrak{n}_+\ot N)v$ for which $(\mathfrak{n}_+\ot N)u=0$.  Since $U(\mathfrak{n}_+\ot N)v$ is weight-graded, we can take $u$ to be a weight vector.
\medskip

\noindent
{\bf\em Step 4}\quad {\em There is a nonzero weight vector $w\in V$, such that
\begin{enumerate}
\item[{\rm (a)}] $((\mathfrak{n}_-\oplus\mathfrak{n}_+)\ot N)w=0$, and
\item[{\rm (b)}] $w$ is a common eigenvector for $\fh\ot N$.
\end{enumerate}  
}
\smallskip

\noindent
{\bf\em Proof}\quad  If we replace $x=h_\ga\ot s$, $y=x_\ga\ot 1$, and $z=2x_\ga\ot s$ by
$$x'=h_\ga\ot s,\ y'=x_{-\ga}\ot 1,\ z'=-2x_{-\ga}\ot s,$$
the arguments given in Steps 1 and 2 show that each element of $\mathfrak{n}_-\ot N$ acts nilpotently on $u$.  As in the proof of Step 3, it now follows that there is a nonzero weight vector $w\in U(\mathfrak{n}_-\ot N)u$ which is annihilated by $\mathfrak{n}_-\ot N$.  Since $\mathfrak{n}_+\ot N$ commutes with $\mathfrak{n}_-\ot N$, $w$ is also annihilated by $\mathfrak{n}_+\ot N$.  

Finally, we note that $w\in U((\mathfrak{n}_-\oplus\mathfrak{n}_+)\ot N)v$ and $\fh\ot N$ commutes with $ U(\mathfrak{n}_-\oplus\mathfrak{n}_+)\ot N)$, so $w$ is a common eigenvector for $\fh\ot N$.

\medskip

\noindent
{\bf\em Step 5}\quad {\em There is a nonzero vector $w\in V$ such that $(\fg\ot N)w=0$.} 

\smallskip

\noindent
{\bf\em Proof}\quad  We need only show that the vector $w$ of Step 4 is annihilated by $\fh\ot N$.  Let $\ga\in\Phi^+$ and $t\in N$, and set $h=h_\ga$, $e=x_\ga$, and $f=x_{-\ga}$.  By Step 4, $(h\ot t)w=\rho w$ for some $\rho\in k$.  We will show that $\rho=0$.

Since $V$ is a weight module, the weight space of $w$ is finite dimensional, so the set $\{(e\ot 1)^r(f\ot 1)^rw\ |\ r\geq 0\}$ is linearly dependent.  Let
$$\sum_{r=0}^\ell c_r(e\ot 1)^r(f\ot 1)^rw=0$$
be a dependence relation with $c_\ell\neq 0$.  By induction, it is straightforward to verify that
\begin{align*}
(e\ot t)(f\ot 1)^r&=(f\ot 1)^r(e\ot t)+r(f\ot 1)^{r-1}(h\ot t)-2\binom{r}{2}(f\ot 1)^{r-2}(f\ot t),\\
(f\ot t)(e\ot 1)^r&=(e\ot 1)^r(f\ot t)-r(e\ot 1)^{r-1}(h\ot t)-2\binom{r}{2}(e\ot 1)^{r-2}(e\ot t).
\end{align*}
Using the fact that $(e\ot t)w=(f\ot t)w=0$, we see that
\begin{align*}
0&=(e\ot t)^\ell(f\ot t)^\ell\sum_{r=0}^\ell c_r(e\ot 1)^r(f\ot 1)^rw\\
&=(-1)^\ell c_\ell(\rho^\ell \ell!)^2w,
\end{align*}
so $\rho=0$ and $(\fh\ot N)w=0$.  Hence $(\fg\ot N)w=0$.

\medskip

\noindent
{\bf\em Step 6}\quad  $I$ is a radical ideal.

\smallskip

\noindent
{\bf\em Proof}\quad  Let $W=\{a\in V\ |\ (\fg\ot N)a=0\}$.  Since $N$ is an ideal of $S$, $W$ is an $\cL$-submodule of $V$.  But $W$ is nonzero by Step 5, so $W=V$ since $V$ is simple.  Therefore, $(\fg\ot N)V=0$, contradicting the faithfulness of the action of $\fg\ot A$ on $V$.  Hence $I$ is a radical ideal of $S$.\qed
\end{proof}

\bigskip

The fact that $I$ is a radical ideal lets us prove the following theorem, one of the main results of this paper.

\begin{theorem}\label{modules-are-evaluation}
Let $\phi:\ \cL\rightarrow\hbox{End}\,V$ be an irreducible admissible representation.  Then $V$ is isomorphic to an evaluation module.
\end{theorem}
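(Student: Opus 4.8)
The plan is to combine Propositions \ref{kernel-is-cofinite} and \ref{radical-ideal} with Theorem \ref{thm:evaluations}. First I would recall that $\ker\phi=\fg\ot I$ where $I=I(V)$, and that by the two cited propositions $I$ is a \emph{radical} ideal of $S$ that is, moreover, \emph{cofinite}: the quotient $A=S/I$ is a finite-dimensional commutative $k$-algebra. Since $k$ is algebraically closed, a finite-dimensional commutative reduced $k$-algebra is semisimple, hence by the structure theory of artinian rings (or simply by the Chinese Remainder Theorem applied to the finitely many maximal ideals of $A$) one has $A\cong k\times\cdots\times k$ ($r$ copies), i.e. $S/I\cong\prod_{i=1}^r S/M_i$ for pairwise distinct maximal ideals $M_1,\ldots,M_r$ of $S$, with $\bigcap_{i=1}^r M_i=I$. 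I expect this algebraic reduction to be short; the only mild subtlety is noting that $r\ge 1$, i.e. $A\neq 0$, which holds because $\fg\ot S$ acts nontrivially on the nonzero module $V$ (equivalently $I\neq S$).

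Next I would identify the $\cL$-action on $V$ with a representation that factors through $\fg^{\oplus r}$. Tensoring the Lie algebra isomorphism
\[
\cL/\ker\phi=\fg\ot(S/I)\;\cong\;\fg\ot\Big(\textstyle\prod_{i=1}^r S/M_i\Big)\;\cong\;\fg^{\oplus r},
\]
and unwinding the identifications, the composite $\fg\ot S\twoheadrightarrow\fg\ot(S/I)\xrightarrow{\sim}\fg^{\oplus r}$ is precisely the evaluation map $\ev_{\underline M}$ for $\underline M=(M_1,\ldots,M_r)$: indeed $x\ot s$ maps to the class of $x\ot(s+I)$, which under the CRT isomorphism is $(s(M_1)x,\ldots,s(M_r)x)$. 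Since $\phi$ kills $\fg\ot I=\ker\ev_{\underline M}$, it descends to a Lie algebra homomorphism $\psi:\fg^{\oplus r}\to\End V$ with $\phi=\psi\circ\ev_{\underline M}$.

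Finally I would invoke Theorem \ref{thm:evaluations}: $V$ is a simple weight $\cL$-module (it is admissible, hence in particular a weight module, and it is simple by hypothesis), the $M_i$ are pairwise distinct elements of $\Max S$, and we have just produced the required factorization $\phi=\psi\circ\ev_{\underline M}$. Therefore $V$ is isomorphic to an evaluation weight module of $\cL$, as claimed. The main obstacle is essentially the first paragraph — correctly packaging ``cofinite radical ideal of a finite-type $k$-algebra over an algebraically closed field $\Rightarrow$ finite intersection of distinct maximal ideals'' — but this is standard commutative algebra (Nullstellensatz plus the fact that a reduced artinian ring is a finite product of fields), so no genuinely new work is needed here beyond citing Propositions \ref{kernel-is-cofinite} and \ref{radical-ideal} and Theorem \ref{thm:evaluations}. \qed
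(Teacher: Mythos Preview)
Your proposal is correct and follows essentially the same route as the paper: invoke Propositions~\ref{kernel-is-cofinite} and \ref{radical-ideal} to see that $I$ is a cofinite radical ideal, use the Nullstellensatz and the Chinese Remainder Theorem to write $S/I\cong k^r$ so that the quotient map $\cL\to\cL/\ker\phi\cong\fg^{\oplus r}$ coincides with $\ev_{\underline{M}}$, and then apply Theorem~\ref{thm:evaluations}. Your remark that $r\geq 1$ (equivalently $I\neq S$) is a harmless aside, since the trivial module is in any case an evaluation module.
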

\begin{proof}
By Theorem \ref{thm:evaluations}, it is enough to show that $\phi$ factors through an evaluation map $\ev_{\underline{M}}:\ \cL\rightarrow\fg^{\oplus r}$ for some collection of distinct maximal ideals $\underline{M}=(M_1,\ldots,M_r)$ of $S$.

By Propositions \ref{kernel-is-cofinite} and \ref{radical-ideal}, the kernel $\ker\phi=\fg\ot I$ is cofinite in $\cL$, and $I$ is a radical ideal.  It follows that $I$ is the intersection of finitely many distinct maximal ideals $M_1,\ldots,M_r\in\Max\,S$.  By the Chinese Remainder Theorem, 
$$\cL/\ker\phi\cong\fg\ot(S/M_1\oplus\cdots\oplus S/M_r),$$
and since $S$ is of finite type over an algebraically closed field $k$, $S/M_i\cong k$ for all $i$.  That is, 
$$\cL/\ker\phi\cong\fg\ot(S/I)=\fg\ot (S/\cap_{i=1}^r M_i)\cong\fg\ot(S/M_1\oplus\cdots\oplus S/M_r)\cong\fg^{\oplus r},$$
and the composite map $\cL\rightarrow\cL/\ker\phi\stackrel{\cong}{\rightarrow}\fg^{\oplus r}$ is precisely $\ev_{\underline{M}}$.  Therefore, $\phi$ factors through $\ev_{\underline{M}}$, and $V$ is isomorphic to an evaluation module.\qed
\end{proof}

\bigskip

By combining Theorem \ref{labelling-of-evalreps}, Theorem \ref{3.14}, and Theorem \ref{modules-are-evaluation}, we obtain our main theorem, the following classification of the simple admissible modules for current algebras:

\begin{theorem}\label{classification}

\begin{enumerate}
\item[{\rm(1)}] Let $V$ be a simple admissible $\cL$-module.  Then there is a collection of distinct maximal ideals $M_1,\ldots,M_r\subset S$ and simple admissible $\fg$-modules $W_1,\ldots,W_r$ such that at most one $W_i$ is infinite dimensional, and $V\cong V(\underline{M},\underline{W})$, where $\underline{M}=(M_1,\ldots,M_r)$ and $\underline{W}=(W_1,\ldots ,W_r)$.
\item[{\rm(2)}] If $\underline{M}=(M_1,\ldots,M_r)$ consists of distinct maximal ideals of $S$, and each $W_i$ in $\underline{W}=(W_1,\ldots ,W_r)$ is simple, then $V(\underline{M},\underline{W})$ is a simple $\cL$-module.  Such an $\cL$-module is also admissible if and only if each $W_i$ is admissible and at most one $W_i$ is infinite dimensional.

\item[{\rm(3)}] Let $\mathcal{A}$ be the set of all isomorphism classes of simple admissible $\fg$-modules, and let $\mathcal{A^\infty}\subseteq\mathcal{A}$ be the set of isomorphism classes of infinite-dimensional simple admissible $\fg$-modules.  The isomorphism classes of simple admissible $\cL$-modules are in natural bijection with the finitely supported functions $\Psi:\Max\,S\rightarrow\mathcal{A}$ for which $\Psi^{-1}(\mathcal{A}^\infty)$ has cardinality at most $1$.
\end{enumerate}
\end{theorem}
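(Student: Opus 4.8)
The plan is to obtain this classification by assembling the three main results established above: Theorem \ref{modules-are-evaluation} (simple admissible modules are evaluation modules), Theorem \ref{3.14} (when an evaluation weight module is admissible), and Theorem \ref{labelling-of-evalreps} (the labelling of simple evaluation weight modules by finitely supported functions). For part (1), I would begin with a simple admissible $\cL$-module $V$ and apply Theorem \ref{modules-are-evaluation} to write $V\cong V(\underline M,\underline W)$ for some pairwise distinct maximal ideals $M_1,\ldots,M_r\subset S$ and simple $\fg$-modules $W_1,\ldots,W_r$. Restricting the $\cL$-action to $\fg\ot 1$ identifies $V$ with the tensor product $W_1\ot\cdots\ot W_r$ as a $\fg$-module, and since the weight spaces of an $\cL$-module are exactly the weight spaces of its restriction to $\fg\ot 1$, this tensor product is an admissible $\fg$-module. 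Theorem \ref{3.14} then forces each $W_i$ to be admissible and at most one of them to be infinite dimensional, which is the assertion of part (1).

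For part (2), the simplicity of $V(\underline M,\underline W)$ when the $M_i$ are distinct and the $W_i$ are simple is the remark following Theorem \ref{thm:evaluations}: the map $\ev_{\underline M}$ is surjective by the Chinese Remainder Theorem, and a tensor product $W_1\ot\cdots\ot W_r$ of simple modules is a simple $\fg^{\oplus r}$-module (Proposition \ref{prop:simples of sums}, read in the easy direction), so its pullback along $\ev_{\underline M}$ is a simple $\cL$-module. The admissibility criterion in part (2) is then exactly Theorem \ref{3.14}.

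For part (3), I would invoke Theorem \ref{labelling-of-evalreps}, which gives a bijection between isomorphism classes of simple evaluation weight $\cL$-modules and finitely supported functions $\Psi:\Max\,S\to\mathcal{M}$, sending the class of $V(\underline M,\underline W)$ to $\Psi_{\underline M,\underline W}$. Since every admissible $\fg$-module is in particular a weight module, $\mathcal{A}\subseteq\mathcal{M}$, and since by part (1) every simple admissible $\cL$-module arises as such an evaluation module, this bijection restricts to the admissible ones. It remains to translate the admissibility condition: by parts (1) and (2), $V(\underline M,\underline W)$ is admissible precisely when every $W_i$ lies in $\mathcal{A}$ and at most one lies in $\mathcal{A}^\infty$. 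Because $\Psi_{\underline M,\underline W}$ sends every maximal ideal outside $\{M_1,\ldots,M_r\}$ to the class $[k]$, which is finite dimensional and hence in $\mathcal{A}$, this condition is equivalent to $\Psi_{\underline M,\underline W}$ taking all its values in $\mathcal{A}$ and satisfying $|\Psi_{\underline M,\underline W}^{-1}(\mathcal{A}^\infty)|\le 1$, which yields the claimed bijection.

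Since all the substantive work is contained in the earlier theorems, I do not expect a genuine obstacle here; the proof is essentially bookkeeping. The one point that warrants care is making sure the condition ``at most one infinite-dimensional tensor component'' corresponds correctly to the preimage condition $|\Psi^{-1}(\mathcal{A}^\infty)|\le 1$ on the labelling function, and that tensor components equal to the trivial module $k$ — equivalently, maximal ideals outside the support of $\Psi$ — are harmless, which holds because $[k]$ is finite dimensional and so lies in $\mathcal{A}$ but not in $\mathcal{A}^\infty$.
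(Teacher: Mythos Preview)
Your proposal is correct and follows exactly the same approach as the paper, which simply states that the theorem is an immediate consequence of Theorems \ref{labelling-of-evalreps}, \ref{3.14}, and \ref{modules-are-evaluation}. You have just unpacked the bookkeeping that the paper leaves implicit.
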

\proof This theorem is an immediate consequence of Theorems \ref{labelling-of-evalreps}, \ref{3.14}, and \ref{modules-are-evaluation}.\qed

\begin{remark} As shown in \cite{BBL}, infinite-dimensional simple admissible $\fg$-modules exist only when $\fg$ is of type $A$ or $C$.  The simple admissible $\cL$-modules are thus finite dimensional for all other $\fg$.
\end{remark}

\begin{remark}  
By Theorem \ref{centre-acts-as-zero}, the statement of Theorem \ref{classification} also holds when $\cL$ is replaced by its universal central extension $\widetilde{\cL}$.
\end{remark}

\end{document}